\def\inte#1{
\displaystyle\mathop{#1\kern0pt}^\circ }
\def\virgp{\raise 2pt\hbox{,}}
\def\cdotpv{\raise 2pt\hbox{;}}
\def\C{\mathop{\mathbb C\kern 0pt}\nolimits}
\def\DD{\mathop{\mathbb D\kern 0pt}\nolimits}
\def\EE{\mathop{{\mathbb E \kern 0pt}}\nolimits}
\def\K{\mathop{\mathbb K\kern 0pt}\nolimits}
\def\N{\mathop{\mathbb N\kern 0pt}\nolimits}
\def\Q{\mathop{\mathbb Q\kern 0pt}\nolimits}
\def\R{\mathop{\mathbb R\kern 0pt}\nolimits}
\def\SS{\mathop{\mathbb S\kern 0pt}\nolimits}
\def\ZZ{\mathop{\mathbb Z\kern 0pt}\nolimits}
\def\TT{\mathop{\mathbb T\kern 0pt}\nolimits}
\def\P{\mathop{\mathbb P\kern 0pt}\nolimits}
\newcommand{\beq}{\begin{equation}}
\newcommand{\eeq}{\end{equation}}
\newcommand{\ben}{\begin{eqnarray}}
\newcommand{\een}{\end{eqnarray}}
\newcommand{\beno}{\begin{eqnarray*}}
\newcommand{\eeno}{\end{eqnarray*}}
\newtheorem{lem}{Lemma}[section]
\newtheorem{prop}{Proposition}[section]
\renewcommand{\theequation}{\thesection.\arabic{equation}}
\begin{document}

\title[Liouville type theorems for the stationary Hall-MHD equations]
{Liouville type theorems for the stationary Hall-MHD equations in local Morrey spaces}

\author[Z. Li and  Y. Su]{Zhouyu Li\textsuperscript{1}      \and   Yifan Su\textsuperscript{2}}

\thanks{$^1$ School of Sciences, Xi'an University of Technology, Xi'an 710054, China}
\thanks{$^2$ School of Mathematics, Northwest University, Xi'an 710127, China}
\thanks{E-mail address: zylimath@163.com (Z. Li); mayifansu@163.com (Y. Su)}

\begin{abstract}
This paper is concerned with the Liouville type theorems for the 3D stationary incompressible Hall-MHD equations. We establish that under some sufficient conditions in local Morrey spaces, solutions of the stationary Hall-MHD equations are identically zero. In particular, we also prove Liouville type results for the stationary incompressible MHD equations  on $\mathbb{R}^3$. Our theorems extend and generalize the classical results for the stationary incompressible Navier-Stokes equations.

\end{abstract}


\date{}

\maketitle


\noindent {\sl Keywords:} Liouville type theorem;  Hall-MHD equations; local Morrey spaces

\vskip 0.2cm

\noindent {\sl AMS Subject Classification (2000):} 35Q30, 76D03.  \\

\renewcommand{\theequation}{\thesection.\arabic{equation}}
\setcounter{equation}{0}
\section{Introduction}
We are concerned in this paper with the following stationary incompressible Hall-MHD equations on the whole space $\mathbb{R}^3$:
\begin{equation}\label{HMHD}
    \begin{cases}
    - \Delta \mathbf{u} + \operatorname{div}(\mathbf{u}\otimes \mathbf{u})+\nabla \Pi -\operatorname{curl} \mathbf{B}\times\mathbf{B}=0, \\
     -\Delta\mathbf{B} + \operatorname{curl}(\mathbf{B}\times \mathbf{u})+ \operatorname{curl}(\operatorname{curl}\mathbf{B}\times\mathbf{B})=0, \\
     \operatorname{div} \mathbf{u}=\operatorname{div} \mathbf{B} = 0.
    \end{cases}
\end{equation}
Here $x=(x_1, x_2, x_3)\in \mathbb{R}^3$ is the space variable. The unknown functions $\mathbf{u}$, $\mathbf{B}$ and $\Pi$ denote the velocity field, the magnetic field and the pressure, respectively.
The Hall term $\operatorname{curl}(\operatorname{curl}\mathbf{B}\times\mathbf{B})$ is derived from the Ohm's law and describes deviation from charge neutrality between the electrons and the ions. Hall term plays an
important role in understanding the magnetic reconnection phenomena in plasmas, neutron stars, star
formation, etc. For more physical explanations, see \cite{A2011, F1991, H2005, L1960}.

The main purpose of this paper is to study Liouville type problem for the system \eqref{HMHD}, which is inspired by the development of Navier-Stokes equations. If $\mathbf{B}=0$, the system \eqref{HMHD} reduces to the stationary incompressible Navier-Stokes equations. A very challenging open question is when the Dirichlet integral
$$\int_{\mathbb{R}^3}|\nabla \mathbf{u}|^2 \, dx < +\infty,$$
whether there exist a nontrivial solution. This uniqueness problem, or equivalently the Liouville
type problem originated from Leray's paper \cite{L1933}, and has been studied extensively recently. The most well-known one is due to Galdi (see \cite{G2011}, Remark X. 9.4, pp.729), which proved that if the smooth solution $\mathbf{u}$ satisfies
$$\mathbf{u}\in L^{\frac{9}{2}}(\mathbb{R}^3),$$
then $\mathbf{u}=0$. The key to the proof is essentially based on the following local estimate (also called the Caccioppoli type inequality):
\begin{equation*}
     \begin{split}
     \int_{|x|\leq R}|\nabla \mathbf{u}|^2 \, dx &\leq C\|\mathbf{u}\|^3_{L^\frac{9}{2}(R\leq |x|\leq 2R)}+ CR^{-\frac{1}{3}}\|\mathbf{u}\|^2_{L^\frac{9}{2}(R\leq |x|\leq 2R)}\\
     &\qquad+ C\|\mathbf{u}\|_{L^\frac{9}{2}(R\leq |x|\leq 2R)}\|\Pi\|_{L^\frac{9}{4}(R\leq |x|\leq 2R)}
     \end{split}
\end{equation*}
for all $R>1$ and a constant $C$ independent of $R$. Later, Chae and Wolf in \cite{C2016} obtained a logarithmic improvement to Galdi's result under the assumption $$\int_{\mathbb{R}^3}|\mathbf{u}|^{\frac{9}{2}}\left\{\operatorname{log}\left(2+\frac{1}{|\mathbf{u}|}\right)\right\}^{-1}\, dx<\infty.$$
Seregin in \cite{S2016} established that $\mathbf{u}\in L^6(\mathbb{R}^3)\cap BMO^{-1}(\mathbb{R}^{3}) $  leads to the Liouville type theorem.
The result of Galdi was also extended to the setting of Lorentz space $L^{\frac{9}{2}, \infty}(\mathbb{R}^3)$ by Kozono et al. in \cite{KTW2017} and a certain kind of local Lorentz type space by Seregin and Wang in \cite{Wang2018}.
Moreover, the authors in \cite{Cha2021, Ja2020, Ja2021} proved the Liouville type results in a more general setting of Morrey spaces and some kind of local Morrey spaces.

However, the Liouville type problem for MHD type system is quite different due to the lack of maximum principle. For the case of MHD equations, Chae and Weng in \cite{CW2016} proved that
the smooth solution $ (\mathbf{u}, \mathbf{B})=0 $ under the condition
\begin{equation*}
     \begin{split}
(\mathbf{u}, \mathbf{B}) \in L^{3}(\mathbb{R}^{3})\quad
\mathrm{and} \quad (\nabla\mathbf{u}, \nabla\mathbf{B})\in L^2(\mathbb{R}^{3}).
     \end{split}
\end{equation*}
Schulz in \cite{Sc2018} showed if $ (\mathbf{u}, \mathbf{B}) \in L^{6}(\mathbb{R}^{3}) \cap BMO^{-1}(\mathbb{R}^{3}) ,$
then  $ (\mathbf{u}, \mathbf{B})=0 $. For the case of Hall-MHD equations, in the famous paper \cite{C2014}, Chae, et al. proved that if
$(\mathbf{u}, \mathbf{B})$ is a smooth solution of \eqref{HMHD} such that
\begin{equation*}
     \begin{split}
     \int_{\mathbb{R}^3}|\nabla \mathbf{u}|^2+ |\nabla \mathbf{B}|^2\, dx<\infty \quad
\mathrm{and} \quad (\mathbf{u}, \mathbf{B})\in L^{\infty}(\mathbb{R}^3)\cap L^{\frac{9}{2}}(\mathbb{R}^3),
     \end{split}
\end{equation*}
then $(\mathbf{u}, \mathbf{B})=0$. Recently,  the first author and Niu in \cite{Li2020} extend the result of Chae, et al. in \cite{C2014}
to the Lorentz space $L^{\frac{9}{2}, \infty}(\mathbb{R}^3)$. For more interesting Liouville type results about MHD and
Hall-MHD equations, we refer to \cite{Ch2021, F2021, LP2021, L2020, Y2020} and the references therein.

Here we consider the Liouville type theorems for the stationary incompressible Hall-MHD equations \eqref{HMHD}. Motivated by \cite{Cha2021, Ja2021, KTW2017}, we investigate some Liouville type theorems provided the velocity and magnetic field belongs to a fairly general functional setting, that is, some local Morrey spaces. It is a natural way to extend the space widely and improve the previous results.

An outline of this paper is as follows: we state our main results in Section 2. In Section 3, we collect some elementary facts, which will be used later. In Section 4, we obtain a characterization
of the pressure term for the Hall-MHD equations. Section 5 is devoted to obtaining the Caccioppoli type inequalities, which are essential for achieving our results.
In Section 6, the proofs of Theorems 1 and 2 are given.


\renewcommand{\theequation}{\thesection.\arabic{equation}}
\setcounter{equation}{0} 

\section{Statement of the results}
Before stating the main theorems, we first recall the definitions of Morrey spaces and local Morrey spaces. For $1<p<r<+\infty$, the homogeneous Morrey space $\dot{M}^{p, r}(\mathbb{R}^3)$ is the set of all $f\in L^p_{loc}(\mathbb{R}^3)$ such that
\begin{equation}\label{def-1}
     \begin{split}
    \|f\|_{\dot{M}^{p, r}}=\sup_{R>0, x_0\in \mathbb{R}^3} R^{\frac{3}{r}}\left(R^{-3}\int_{B(x_0, R)}|f(x)|^p \, dx\right)^{\frac{1}{p}}< +\infty ,
     \end{split}
\end{equation}
where $B(x_0, R)$ is a ball of radius $R$ centered at $x_0$. Here the term $R^{\frac{3}{r}}$ characterizes the decaying of the averaged quantity $\left(R^{-3}\int_{B(x_0, R)}|f(x)|^p \, dx\right)^{\frac{1}{p}}$
when $R$ is large. The spaces satisfy the following embedding relation
\begin{equation*}
     \begin{split}
 L^r(\mathbb{R}^3)\hookrightarrow L^{r,q}(\mathbb{R}^3)\hookrightarrow\dot{M}^{p, r}, \quad 1<p<r\leq q\leq +\infty,
     \end{split}
\end{equation*}
where the space $L^{r,q}(\mathbb{R}^3)$ is the Lorentz space.

The local Morrey spaces describe the averaged decaying of functions in a more general setting. Let $\gamma\geq0$ and $1<p<+\infty$ , the local Morrey space $M^p_\gamma(\mathbb{R}^3)$ is the set of functions
$f\in L^p_{loc}(\mathbb{R}^3)$ such that
\begin{equation}\label{def-2}
     \begin{split}
   \|f\|_{M^p_\gamma}=\sup_{R\geq 1} \left( R^{-\gamma} \int_{B(0, R)} |f(x)|^p \, dx \right)^{\frac{1}{p}}< +\infty .
     \end{split}
\end{equation}
It is easy to verify that $M^p_\gamma(\mathbb{R}^3)$ is a Banach space under the norm $\|\cdot\|_{M^p_\gamma}$. Remark that the parameter $\gamma$ describes the behavior of the quantity
$\left( \int_{B(0, R)} |f(x)|^p \, dx \right)^{\frac{1}{p}}$ when $R$ is large. Furthermore, the spaces satisfy the continuous embedding
\begin{equation*}
     \begin{split}
  M^p_{\gamma_1}(\mathbb{R}^3)\hookrightarrow M^p_{\gamma_2}(\mathbb{R}^3), \quad \gamma_1\leq\gamma_2.
     \end{split}
\end{equation*}
We emphasize that for $1<p<r<+\infty$, taking the parameter $\gamma$ such that $3(1-p/r) <\gamma$, then
\begin{equation*}
     \begin{split}
 \dot{M}^{p, r}(\mathbb{R}^3)= M^p_{3(1-p/r)}(\mathbb{R}^3)\hookrightarrow M^p_{\gamma}(\mathbb{R}^3).
     \end{split}
\end{equation*}
Hence, in this sense local Morrey space $M^p_\gamma(\mathbb{R}^3)$ can be regarded as a generalization of homogeneous Morrey space $\dot{M}^{p, r}(\mathbb{R}^3)$. For more references about the Morrey spaces and the
local Morrey spaces, see \cite{F2020, L2016}.

Finally, we define the space $M^p_{\gamma, 0}(\mathbb{R}^3)$ as the subspace of all functions $f\in M^p_\gamma(\mathbb{R}^3)$ such that
\begin{equation*}
     \begin{split}
    \lim_{R\rightarrow +\infty} \left( R^{-\gamma} \int_{\frac{R}{2}\leq|x|\leq R} |f(x)|^p \, dx  \right)^{\frac{1}{p}} =0.
     \end{split}
\end{equation*}

To simplify the notations, we denote
$$\eta=\eta(p, \gamma)= \frac{\gamma}{p}+\frac{2}{3}-\frac{3}{p},$$
$B_R=\{x\in \mathbb{R}^3: |x|\leq R \}$ and $C_R=\{x\in \mathbb{R}^3: R\leq |x|\leq 2R \}$.
We state the first result of this paper as follows:

\noindent\textbf{Theorem 1.} \label{thm-main-1}
Suppose that $(\mathbf{u}, \mathbf{B})$ is a smooth solution to \eqref{HMHD} with $\nabla \mathbf{B} \in L^2(\mathbb{R}^3)$ and $(\mathbf{u}, \mathbf{B})\in M^p_\gamma(\mathbb{R}^3)$ for $0<\gamma<3\leq p<+\infty$.\\
{\rm{(i)}} If $\eta(p, \gamma)<0$, then $\mathbf{u}=\mathbf{B}= 0$ on $\mathbb{R}^3$.\\
{\rm{(ii)}} If $\eta(p, \gamma)=0$, and if moreover
\begin{equation}\label{con-0}
     \begin{split}
   \|\mathbf{u}\|^3_{{M^p_\gamma}(\mathbb{R}^3)}+\|\mathbf{u}\|_{{M^p_\gamma}(\mathbb{R}^3)}\|\mathbf{B}\|^2_{{M^p_\gamma}(\mathbb{R}^3)}
   \leq \delta \|(\nabla \mathbf{u}, \nabla \mathbf{B})\|^2_{L^2(\mathbb{R}^3)}
     \end{split}
\end{equation}
for some $0< \delta < 1/ C_0$, then $\mathbf{u}=\mathbf{B}= 0$ on $\mathbb{R}^3$.\\
{\rm{(iii)}} If $\eta(p, \gamma)>0$, and if moreover
\begin{equation}\label{con-1}
     \begin{split}
    \lim_{R\rightarrow +\infty} R^{3\eta(p, \gamma)}\left((R^{-\gamma}\int_{C_R}|\mathbf{u}|^p \, dx)^{\frac{1}{p}} + (R^{-\gamma}\int_{C_R}|\mathbf{B}|^p \, dx)^{\frac{1}{p}}\right) =0,
     \end{split}
\end{equation}
then $\mathbf{u}=\mathbf{B}= 0$ on $\mathbb{R}^3$.

On the other hand, in the framework of a slightly smaller space $M^p_{\gamma, 0}(\mathbb{R}^3)$, we do not need any additional assumption similar to \eqref{con-0}.
Specifically, this idea can be stated as follows.

\noindent\textbf{Theorem 2.}\label{thm-main-2}
Suppose that $(\mathbf{u}, \mathbf{B})$ is a smooth solution to \eqref{HMHD} with $\nabla \mathbf{B} \in L^2(\mathbb{R}^3)$, $\mathbf{u}\in M^p_{\gamma, 0}(\mathbb{R}^3)$ and $\mathbf{B}\in M^p_\gamma(\mathbb{R}^3)$ for $0<\gamma<3\leq p<+\infty$. If $(p, \gamma)$ is such that $\eta(p, \gamma)= 0$, then $\mathbf{u}=\mathbf{B}= 0$ on $\mathbb{R}^3$.

It should be noted that in Theorems 1 and 2 we always set the parameters $0<\gamma<3\leq p<+\infty$. Since our methods deeply rely on the Cacciopoli type inequalities, for more details see Propositions \ref{prop-1}
and \ref{prop-2} below, we need the condition $3\leq p<+\infty$. In addition, in order to obtain the estimate for the pressure term,  we need to use the $\operatorname{Calder\acute{o}n}$-Zygmund inequality in the spaces $ M^p_\gamma(\mathbb{R}^3)$, that is, point {\rm{(i)}} of Lemma \ref{Lem-1}, so the condition $0<\gamma<3$ is required.

The main interest of our results is based on the fact that we use a fairly general framework to solve the Liouville
type problem.
Indeed, we know that if $\eta(p, \gamma)\leq 0$, then for $3<r<\frac{9}{2}$ and $3\leq p< r$, we have the embeddings
\begin{equation*}
     \begin{split}
    L^r(\mathbb{R}^3)\hookrightarrow L^{r, \infty}(\mathbb{R}^3)\hookrightarrow \dot{M}^{p, r}(\mathbb{R}^3)\hookrightarrow M^p_\delta(\mathbb{R}^3)\hookrightarrow M^p_{\gamma, 0}(\mathbb{R}^3),
     \end{split}
\end{equation*}
involving the Lebesgue, Lorentz, Morrey and local Morrey spaces. Here, the parameter $\delta$ satisfies $3(1-p/r)<\delta<\gamma$.
In particular, for the values $\gamma=1$ and $p=3$, and for
$r=\frac{9}{2}$ and $\frac{9}{2}< q <+\infty$, we also have the embeddings
\begin{equation*}\label{emb}
     \begin{split}
    L^{\frac{9}{2}}(\mathbb{R}^3)\hookrightarrow L^{\frac{9}{2}, q}(\mathbb{R}^3)\hookrightarrow M^3_{1, 0}(\mathbb{R}^3).
     \end{split}
\end{equation*}
Notice that when $\gamma=1$ and $p=3$, we  have $\eta(3, 1)=0$. For more references about the inclusion relationship of local Morrey spaces, we recommend \cite{Osc2021, Ja2021}.
With the above facts in hand, we now give several remarks.

{\rm{(i).}}
Our results improve the result of  Chae, et al. in \cite{C2014}, and the result of \cite{Li2020} is extended to the local Morrey spaces.

{\rm{(ii).}}When removing the Hall term $\operatorname{curl}(\operatorname{curl}\mathbf{B}\times\mathbf{B})$, the system \eqref{HMHD} reduces to the stationary incompressible MHD system. In this case, we do not
need the assumption condition $\nabla \mathbf{B}\in L^2(\mathbb{R}^3)$, because of the fact that this condition is only used in dealing with the Hall term, for more details see \eqref{3-8} below. Hence we also obtain
two new results for incompressible MHD system, which generalize the related result given in \cite{L2020} to the local Morrey spaces.

{\rm{(iii).}}When $\mathbf{B}=0$, the system \eqref{HMHD} reduces to the stationary incompressible Navier-Stokes
system. Clearly, our results generalize the corresponding results of Galdi \cite{G2011} and Jarr\'{i}n \cite{Ja2021}.


\renewcommand{\theequation}{\thesection.\arabic{equation}}
\setcounter{equation}{0} 

\section{Preliminaries}
For the convenience of readers, we will summarize some elementary facts about the local Morrey spaces and the weighted Lebesgue spaces.

Let us recall the definition of the weighted Lebesgue spaces.
For $\gamma \geq 0$ and $1< p< +\infty$, we consider the weight $\omega_\gamma(x)=\frac{1}{(1+|x|)^\gamma}$, then the weighted Lebesgue spaces $L^p_{\omega_\gamma}(\mathbb{R}^3)$ is defined as $L^p_{\omega_\gamma}(\mathbb{R}^3)=L^p(\omega_\gamma\, dx).$ It should be stressed that by using the Cauchy-Schwarz inequality we have the
following continuous embedding:
\begin{equation}\label{em}
     \begin{split}
L^p_{\omega_\gamma}(\mathbb{R}^3)\hookrightarrow L^\frac{p}{2}_{\omega_\gamma}(\mathbb{R}^3)
     \end{split}
\end{equation}
for $ \frac{9}{4}< \gamma< 3$.

In order to study the characterization of the pressure term for the general Hall-MHD equations, we introduce a time-space version of the local Morrey spaces. Let $\gamma>0$ and $1< p< +\infty$. We define
the space $M^p_\gamma L^p(0, T)$ as the set of functions $f\in L^p_{loc}([0, T]\times \mathbb{R}^3)$ such that
\begin{equation*}
     \begin{split}
   \|f\|_{M^p_\gamma L^p(0, T)}=\sup_{R\geq 1} \left( R^{-\gamma} \int_0^T \int_{B(0, R)} |f(x, t)|^p \, dxdt \right)^{\frac{1}{p}}< +\infty.
     \end{split}
\end{equation*}
Furthermore, we define the space $M^p_{\gamma, 0} L^p(0, T)$ as the set of functions $f \in M^p_\gamma L^p(0, T) $ such that
\begin{equation*}
     \begin{split}
    \lim_{R\rightarrow +\infty} \left( R^{-\gamma} \int_0^T \int_{\frac{R}{2}\leq|x|\leq R} |f(x)|^p \, dxdt \right)^{\frac{1}{p}}  =0.
     \end{split}
\end{equation*}
For details see Section 7 of \cite{F2020}. Finally, we give two useful lemmas.
\begin{lem}[Lemma 2.1 of \cite{Osc2021}]\label{Lem-1a}
Assume $0\leq \gamma < \delta$, $1< p< +\infty$ and $0<T<+\infty$. Then the following inclusion relationships are true:\\
{\rm{(i)}} $L^p_{\omega_\gamma}(\mathbb{R}^3)\hookrightarrow M^p_{\gamma, 0}(\mathbb{R}^3)\hookrightarrow M^p_\gamma(\mathbb{R}^3) \hookrightarrow L^p_{\omega_\delta}(\mathbb{R}^3)$;\\
{\rm{(ii)}} $ L^p([0, T]; L^p_{\omega_\gamma}(\mathbb{R}^3))\hookrightarrow M^p_{\gamma, 0}L^p(0, T) \hookrightarrow M^p_\gamma L^p(0, T) \hookrightarrow L^p([0, T]; L^p_{\omega_\delta}(\mathbb{R}^3)).$
\end{lem}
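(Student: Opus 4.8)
The plan is to establish the two chains of continuous inclusions in (i) and (ii) one link at a time. Since the space-time statement (ii) is obtained from (i) merely by inserting the finite integration $\int_0^T(\cdot)\,dt$ and invoking Tonelli's theorem, I will concentrate on (i); all constants produced will be independent of $T$. The middle inclusion $M^p_{\gamma,0}(\mathbb{R}^3)\hookrightarrow M^p_\gamma(\mathbb{R}^3)$ is immediate, $M^p_{\gamma,0}$ being by construction a subspace of $M^p_\gamma$ carrying the same norm, so all the content lies in the two outer inclusions, and both of these reduce to comparing the weight $\omega_\gamma(x)=(1+|x|)^{-\gamma}$ with the powers of $R$ normalizing the local Morrey norm.

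For $L^p_{\omega_\gamma}(\mathbb{R}^3)\hookrightarrow M^p_{\gamma,0}(\mathbb{R}^3)$, I would start from the elementary observation that for $R\ge 1$ and $|x|\le R$ one has $1+|x|\le 2R$, hence (using $\gamma\ge 0$) $1\le (2R)^\gamma\omega_\gamma(x)$. Multiplying by $|f(x)|^p$ and integrating over $B(0,R)$ gives $R^{-\gamma}\int_{B(0,R)}|f|^p\,dx\le 2^\gamma\int_{\mathbb{R}^3}\omega_\gamma|f|^p\,dx$, and taking the supremum over $R\ge 1$ yields $\|f\|_{M^p_\gamma}\le 2^{\gamma/p}\|f\|_{L^p_{\omega_\gamma}}$. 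Applying the same pointwise comparison on the annulus $\{R/2\le|x|\le R\}$ instead gives $R^{-\gamma}\int_{\{R/2\le|x|\le R\}}|f|^p\,dx\le 2^\gamma\int_{\{|x|\ge R/2\}}\omega_\gamma|f|^p\,dx$; since $\omega_\gamma|f|^p\in L^1(\mathbb{R}^3)$, the right-hand side tends to $0$ as $R\to+\infty$ by the dominated convergence theorem, which is exactly the condition defining $M^p_{\gamma,0}(\mathbb{R}^3)$.

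For $M^p_\gamma(\mathbb{R}^3)\hookrightarrow L^p_{\omega_\delta}(\mathbb{R}^3)$, I would decompose $\mathbb{R}^3=B(0,1)\cup\bigcup_{k\ge 0}A_k$ with $A_k=\{2^k\le|x|<2^{k+1}\}$. On $B(0,1)$ one has $\int_{B(0,1)}\omega_\delta|f|^p\,dx\le\int_{B(0,1)}|f|^p\,dx\le\|f\|_{M^p_\gamma}^p$ (the Morrey norm evaluated at $R=1$). On each $A_k$, using $\omega_\delta\le 2^{-k\delta}$ and $A_k\subset B(0,2^{k+1})$,
\[
\int_{A_k}\omega_\delta|f|^p\,dx\le 2^{-k\delta}\int_{B(0,2^{k+1})}|f|^p\,dx\le 2^{-k\delta}(2^{k+1})^\gamma\|f\|_{M^p_\gamma}^p=2^\gamma\,2^{-k(\delta-\gamma)}\|f\|_{M^p_\gamma}^p.
\]
Summing over $k\ge 0$, the geometric series converges precisely because $\gamma<\delta$, and one obtains $\|f\|_{L^p_{\omega_\delta}}^p\le C(\gamma,\delta)\|f\|_{M^p_\gamma}^p$.

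For (ii) I would repeat the three arguments above with $\int_0^T(\cdot)\,dt$ inserted throughout: the finiteness of $T$ makes the time integral harmless, Tonelli's theorem lets it commute with all the spatial manipulations, and the dominated convergence step is applied to $\omega_\gamma|f|^p\in L^1([0,T]\times\mathbb{R}^3)$. I do not expect any genuine obstacle in this lemma; the only points requiring a little care are keeping track of the absolute constants $2^\gamma$, $2^{\gamma/p}$ and $C(\gamma,\delta)$, the use of the strict inequality $\gamma<\delta$ (without which the dyadic sum in the last inclusion diverges), and the role of $\gamma\ge 0$, which is needed only to make $\omega_\gamma$ bounded and to give the comparison $1+|x|\le 2R$ a power of $R$ with the correct sign.
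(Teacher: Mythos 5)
Your proof is correct. Note that the paper itself does not prove this lemma: it is quoted verbatim as Lemma 2.1 of the cited reference of Fern\'andez-Dalgo and Jarr\'{\i}n, so there is no in-paper argument to compare against. Your three steps --- the pointwise comparison $1\le (2R)^{\gamma}\omega_\gamma(x)$ on $B(0,R)$ for $R\ge 1$ (which also gives the vanishing condition on the annuli via the integrable tail of $\omega_\gamma|f|^p$), the trivial middle inclusion, and the dyadic decomposition with the geometric series converging precisely because $\gamma<\delta$ --- constitute the standard proof and match what the cited reference does; the reduction of (ii) to (i) by Tonelli is likewise unproblematic since $T<+\infty$.
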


\begin{lem}[Lemma 2.2 and Corollary 2.1 of \cite{Osc2021}]\label{Lem-1}
Assume $0< \gamma <3$ and $1<p<+\infty$. Then there hold\\
{\rm{(i)}} $\|\nabla^2(-\Delta)^{-1}f\|_{M^p_\gamma(\mathbb{R}^3)}\leq C(p, \gamma)\|f\|_{M^p_\gamma(\mathbb{R}^3)}$;\\
{\rm{(ii)}} $\|\mathcal{M}_f\|_{M^p_\gamma(\mathbb{R}^3)}\leq C(p, \gamma)\|f\|_{M^p_\gamma(\mathbb{R}^3)}$, where $\mathcal{M}$ is the Hardy-Littlewood maximal function operator;\\
{\rm{(iii)}} The points {\rm{(i)}} and {\rm{(ii)}} also hold for the weighted Lebesgue spaces $L^p_{\omega_\gamma}(\mathbb{R}^3).$
\end{lem}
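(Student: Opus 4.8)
The plan is to treat the three assertions through a ``near field / far field'' decomposition centered at the origin, the restriction $0<\gamma<3$ being precisely what makes the far field summable.

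\textbf{Point (iii).} I would dispatch the weighted statements first, as they are cleanest. The weight $\omega_\gamma(x)=(1+|x|)^{-\gamma}$ is comparable to $\min(1,|x|^{-\gamma})$, and the power weight $|x|^{-\gamma}$ belongs to the Muckenhoupt class $A_p(\R^3)$ exactly when $-3<-\gamma$, i.e. $\gamma<3$ (the upper constraint $-\gamma<3(p-1)$ being automatic since $\gamma>0$); the bounded modification near the origin does not affect $A_p$ membership. Hence $\omega_\gamma\in A_p(\R^3)$ for every $1<p<+\infty$ and $0<\gamma<3$. The classical Muckenhoupt theorem for the maximal operator and the Coifman--Fefferman theorem for Calder\'on--Zygmund operators then yield at once $\|\mathcal{M}f\|_{L^p_{\omega_\gamma}}\le C\|f\|_{L^p_{\omega_\gamma}}$ and, since $\nabla^2(-\Delta)^{-1}$ is a convolution Calder\'on--Zygmund operator (its kernel is a constant multiple of the identity plus a principal value kernel homogeneous of degree $-3$), $\|\nabla^2(-\Delta)^{-1}f\|_{L^p_{\omega_\gamma}}\le C\|f\|_{L^p_{\omega_\gamma}}$.

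\textbf{Points (i) and (ii).} For the local Morrey estimates the two operators can be handled in parallel; write $T\in\{\mathcal{M},\nabla^2(-\Delta)^{-1}\}$. Fix $R\ge1$ and split $f=f_1+f_2$ with $f_1=f\mathbf{1}_{B_{2R}}$ and $f_2=f\mathbf{1}_{B_{2R}^c}$. For the near field I would invoke the unweighted $L^p(\R^3)$ boundedness of $\mathcal{M}$ and of $\nabla^2(-\Delta)^{-1}$, giving $\|Tf_1\|_{L^p(B_R)}\le\|Tf_1\|_{L^p(\R^3)}\le C\|f\|_{L^p(B_{2R})}\le C(2R)^{\gamma/p}\|f\|_{M^p_\gamma}$, where I used $2R\ge1$. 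For the far field, decompose $B_{2R}^c=\bigcup_{j\ge1}A_j$ into dyadic annuli $A_j=\{2^jR\le|y|<2^{j+1}R\}$. For $x\in B_R$ and $y\in A_j$ one has $|x-y|\ge 2^{j-1}R$, so using the kernel bound $|K(x-y)|\le C|x-y|^{-3}$ (for $\mathcal{M}$, the analogous averaging bound), H\"older's inequality with $|A_j|\le C(2^jR)^3$, and $\|f\|_{L^p(A_j)}\le(2^{j+1}R)^{\gamma/p}\|f\|_{M^p_\gamma}$,
\[
|Tf_2(x)|\le C\sum_{j\ge1}(2^jR)^{-3}\int_{A_j}|f|\,dy\le C\|f\|_{M^p_\gamma}\sum_{j\ge1}(2^jR)^{(\gamma-3)/p}=C'R^{(\gamma-3)/p}\|f\|_{M^p_\gamma},
\]
the geometric series converging because $\gamma<3$. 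Multiplying by $|B_R|^{1/p}\le CR^{3/p}$ gives $\|Tf_2\|_{L^p(B_R)}\le CR^{\gamma/p}\|f\|_{M^p_\gamma}$; adding the two pieces and taking $\sup_{R\ge1}R^{-\gamma/p}(\cdot)$ yields (i) and (ii).

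\textbf{Main obstacle.} The only delicate point is the far-field tail: the averaged mass of $f$ over $A_j$, controlled through the Morrey norm by $(2^{j+1}R)^{\gamma/p}$, must be beaten by the kernel decay $(2^jR)^{-3}$ once the H\"older factor $(2^jR)^{3/p'}$ is absorbed; the net exponent is $(\gamma-3)/p$, negative exactly under $\gamma<3$. Keeping constants independent of $R$ — which is why centering the Morrey balls at the origin matters, so a single splitting radius $2R\ge1$ serves for every $R$ — is the book-keeping one must watch; the rest is the standard near/far decomposition for singular integrals and maximal functions on Morrey-type spaces.
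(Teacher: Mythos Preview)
The paper does not supply a proof of this lemma at all: it is quoted with attribution to Lemma~2.2 and Corollary~2.1 of \cite{Osc2021} and then used as a black box. There is therefore no ``paper's own proof'' to compare against.

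Your argument is correct and is in fact the standard route to such bounds. The $A_p$ shortcut for point~(iii) is sound: $(1+|x|)^{-\gamma}$ is an $A_p(\R^3)$ weight precisely for $0<\gamma<3$, so Muckenhoupt and Coifman--Fefferman apply. For points~(i) and~(ii), the near/far decomposition $f=f\mathbf{1}_{B_{2R}}+f\mathbf{1}_{B_{2R}^c}$ with unweighted $L^p$ boundedness on the near piece and a dyadic-shell kernel estimate on the far piece is the right mechanism, and you have correctly identified that the geometric tail $\sum_{j\ge1}(2^jR)^{(\gamma-3)/p}$ converges exactly when $\gamma<3$. One small point worth tightening: for the maximal function the ``kernel bound'' phrasing is a slight abuse; what you actually use is that any ball $B(x,\rho)$ meeting $B_{2R}^c$ with $x\in B_R$ forces $\rho\ge R$, whence the average is controlled by $\rho^{-3}\int_{B_{2\rho}}|f|\le C\rho^{(\gamma-3)/p}\|f\|_{M^p_\gamma}$, maximised at $\rho=R$. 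With that clarification the proof is complete.
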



\renewcommand{\theequation}{\thesection.\arabic{equation}}
\setcounter{equation}{0} 

\section{characterization of the pressure term}
In this section, we aim at establishing a characterization of the pressure term for the following time-dependent Hall-MHD equations:
\begin{equation}\label{THMHD}
    \begin{cases}
    \partial_t \mathbf{u} - \Delta\mathbf{u} + \operatorname{div}(\mathbf{u}\otimes \mathbf{u})+\nabla \Pi -\operatorname{curl} \mathbf{B}\times\mathbf{B}=0, \quad (t, x)\in \mathbb{R}^+\times \mathbb{R}^3, \\
    \partial_t \mathbf{B} -\Delta\mathbf{B} + \operatorname{curl}(\mathbf{B}\times \mathbf{u})+ \operatorname{curl}(\operatorname{curl}\mathbf{B}\times\mathbf{B})=0,  \quad (t, x)\in \mathbb{R}^+\times \mathbb{R}^3, \\
    \operatorname{div} \mathbf{u}=\operatorname{div} \mathbf{B} = 0, \quad (t, x)\in \mathbb{R}^+\times \mathbb{R}^3,\\
    (u, B)|_{t=0} = (u_0, B_0), \quad x\in \mathbb{R}^3.
    \end{cases}
\end{equation}
More precisely, we show that the pressure $\Pi$ is always related to the velocity  field $\mathbf{u}$ and the magnetic field $\mathbf{B}$ in the general setting of the time-space local Morrey spaces. It should be noted that the
following proposition has an independent interest when seeking for very general frameworks in which the pressure is related to the other unknowns in the system \eqref{THMHD}.
\begin{prop} \label{prop-1a}
Assume that $(\mathbf{u}, \mathbf{B}, \Pi)$ is a solution of the system \eqref{THMHD} with $\Pi\in \mathcal{D}'([0, T]\times \mathbb{R}^3)$ and $(\mathbf{u}, \mathbf{B})\in M^p_\gamma L^p(0, T)$ for $0< \gamma<3$, $2< p< +\infty$.
Then the term $\nabla \Pi$ is given by the formula
\begin{equation*}
  \nabla \Pi=\nabla \left( \sum_{i,j=1}^3(-\Delta)^{-1}\partial_i\partial_j(u_iu_j-B_iB_j) -\frac{|\mathbf{B}|^2}{2} \right).
\end{equation*}
\end{prop}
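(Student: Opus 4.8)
The plan is to take the divergence of the first equation in \eqref{THMHD} and solve the resulting elliptic equation for $\Pi$, then show the decay/integrability coming from $(\mathbf{u},\mathbf{B})\in M^p_\gamma L^p(0,T)$ is strong enough to pin down $\Pi$ uniquely up to a function of $t$ alone (which disappears under $\nabla$). Concretely, applying $\operatorname{div}$ to the momentum equation and using $\operatorname{div}\mathbf{u}=0$ kills the $\partial_t\mathbf{u}$ and $-\Delta\mathbf{u}$ terms (since $\operatorname{div}\Delta\mathbf{u}=\Delta\operatorname{div}\mathbf{u}=0$), leaving
\begin{equation*}
  -\Delta\Pi=-\sum_{i,j=1}^3\partial_i\partial_j(u_iu_j-B_iB_j)+\operatorname{div}(\operatorname{curl}\mathbf{B}\times\mathbf{B}).
\end{equation*}
The first step is the algebraic identity $\operatorname{curl}\mathbf{B}\times\mathbf{B}=(\mathbf{B}\cdot\nabla)\mathbf{B}-\tfrac12\nabla|\mathbf{B}|^2$, so that, using $\operatorname{div}\mathbf{B}=0$, one gets $\operatorname{div}(\operatorname{curl}\mathbf{B}\times\mathbf{B})=\sum_{i,j}\partial_i\partial_j(B_iB_j)-\tfrac12\Delta|\mathbf{B}|^2$. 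Substituting back, the $B_iB_j$ terms cancel and we are left with
\begin{equation*}
  -\Delta\left(\Pi+\frac{|\mathbf{B}|^2}{2}\right)=-\sum_{i,j=1}^3\partial_i\partial_j(u_iu_j-B_iB_j),
\end{equation*}
i.e. $-\Delta\bigl(\Pi+\tfrac12|\mathbf{B}|^2-\sum_{i,j}(-\Delta)^{-1}\partial_i\partial_j(u_iu_j-B_iB_j)\bigr)=0$ in $\mathcal{D}'$.

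The second step is to make sense of the candidate $\Pi_0:=\sum_{i,j}(-\Delta)^{-1}\partial_i\partial_j(u_iu_j-B_iB_j)-\tfrac12|\mathbf{B}|^2$ and to verify it lies in a space where $-\Delta$ has trivial kernel modulo $t$-dependent functions. Here I would argue pointwise in $t$: for a.e.\ $t$, $\mathbf{u}(\cdot,t),\mathbf{B}(\cdot,t)$ belong to $L^p_{\omega_\delta}(\mathbb{R}^3)$ for any $\delta\in(\gamma,3)$ by Lemma \ref{Lem-1a}(ii) (slicing in time), hence $u_iu_j-B_iB_j\in L^{p/2}_{\omega_{?}}$; then by the Calderón–Zygmund bound on weighted Lebesgue spaces, point (iii) of Lemma \ref{Lem-1} (applied with exponent $p/2>1$ since $p>2$), the Riesz-transform combination $\sum_{i,j}(-\Delta)^{-1}\partial_i\partial_j(u_iu_j-B_iB_j)=\sum_{i,j}R_iR_j(u_iu_j-B_iB_j)$ is bounded on the relevant weighted $L^{p/2}$ space, so $\Pi_0(\cdot,t)$ is a genuine tempered distribution with controlled growth. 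Integrating the resulting bound in $t$ gives $\Pi_0\in L^{p/2}([0,T];L^{p/2}_{\omega_\delta})$, or at least $\Pi_0\in L^1_{loc}$ with sub-polynomial growth.

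The third step is the Liouville argument for $-\Delta$: any $h\in\mathcal{D}'([0,T]\times\mathbb{R}^3)$ with $\Delta_x h=0$ in $\mathcal{D}'$ and with $h(\cdot,t)$ of at most polynomial growth (growth slower than $|x|^{?}$, controlled by $\delta<3$ and $p<\infty$ via the embedding into $L^{p/2}_{\omega_\delta}$) must, for a.e.\ $t$, be a harmonic polynomial in $x$; the growth restriction forces its degree to be low enough that $\nabla_x h$ is itself a polynomial, and a further check using the quantitative weighted bound (the weight $\omega_\delta$ with $\delta<3$ is not integrable against any nonzero polynomial of degree $\ge 0$ unless the degree is strictly negative, which is impossible) shows $\nabla_x h=0$. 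Therefore $\nabla\Pi=\nabla\Pi_0$, which is exactly the claimed formula since $\nabla$ annihilates the additive $t$-only ambiguity.

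The main obstacle I expect is the third step: justifying rigorously that the weighted integrability class we land in is narrow enough to kill all nonzero gradients of harmonic polynomials, i.e.\ quantifying precisely the admissible growth. Everything hinges on the inequality $\delta<3$ (this is exactly why the hypothesis $0<\gamma<3$ appears, as the authors remark): it guarantees the weight decays fast enough that the weighted $L^{p/2}$ norm of a nonconstant polynomial — and more relevantly of $\nabla$ of a harmonic polynomial of degree $\ge 1$ — is infinite. One has to be careful that the CZ estimate in Lemma \ref{Lem-1}(iii) is being applied at exponent $p/2$, which is $>1$ precisely because $p>2$, matching the hypothesis of the proposition; and that slicing in time and reassembling is legitimate (Fubini, measurability of the $t$-sections), which is routine but should be stated. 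A secondary subtlety is that $\Pi$ is only assumed to be a distribution, so one cannot manipulate $|\mathbf{B}|^2$ and products naively at the level of $\Pi$; the argument must be organized so that all products are formed only from $\mathbf{u},\mathbf{B}$ (which are smooth, or at least in $M^p_\gamma L^p$), and $\Pi$ enters only linearly through the PDE.
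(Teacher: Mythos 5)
Your overall strategy coincides with the paper's: identify the candidate $P=\sum_{i,j}(-\Delta)^{-1}\partial_i\partial_j(u_iu_j-B_iB_j)-\tfrac12|\mathbf{B}|^2$, place it in $L^{p/2}([0,T];L^{p/2}_{\omega_\delta}(\mathbb{R}^3))$ via Lemmas\refer{Lem-1a} and\refer{Lem-1}, take the divergence of the momentum equation to get $\Delta(\Pi-P)=0$, and kill the harmonic remainder using the non-integrability of the weight $\omega_\delta$ for $\delta<3$. However, your third step has a genuine gap. You assert that $h=\Pi-P$ can be sliced in time and that $h(\cdot,t)$ has at most polynomial growth, and from there you invoke a Liouville argument. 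Neither assertion is justified: $\Pi$ is only assumed to lie in $\mathcal{D}'([0,T]\times\mathbb{R}^3)$, so its time slices are not defined, and $\Delta_x h=0$ in space--time $\mathcal{D}'$ does not even give regularity in $t$ (the operator $\Delta_x$ is not hypoelliptic on $\mathbb{R}_t\times\mathbb{R}^3_x$; e.g.\ $h(t,x)=g(t)$ solves it for any distribution $g$). More importantly, a harmonic element of $\mathcal{D}'(\mathbb{R}^3)$ is a polynomial only if it is \emph{tempered}; without temperedness, solutions like $e^{x_1}\cos(x_2)$ are admissible, and nothing in your write-up rules them out. You cannot obtain the needed growth control on $\Pi$ by slicing, because the only handle on $\Pi$ is the PDE, whose right-hand side contains $\partial_t\mathbf{u}$ and $\Delta\mathbf{u}$ --- distributional derivatives of an $M^p_\gamma L^p$ function, not functions with pointwise bounds.

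The missing idea, which is exactly what the paper supplies, is a space--time mollification: set $A_\varepsilon(t)=(\alpha\psi)\ast\nabla\Pi(\cdot,t)-(\alpha\psi)\ast\nabla P(\cdot,t)$ with $\alpha\in C^\infty_0(\mathbb{R})$, $\psi\in C^\infty_0(\mathbb{R}^3)$, and use the equation to rewrite $(\alpha\psi)\ast\nabla\Pi$ as
\begin{equation*}
\bigl[\bigl(-(\partial_t\alpha)\psi+\alpha\Delta\psi\bigr)\ast\mathbf{u}\bigr]
-\bigl[(\alpha\nabla\psi)\ast(\mathbf{u}\otimes\mathbf{u})\bigr]
-\Bigl[(\alpha\nabla\psi)\ast\bigl(\mathbf{B}\otimes\mathbf{B}-\tfrac12|\mathbf{B}|^2 I\bigr)\Bigr],
\end{equation*}
so that all time and space derivatives fall on the test functions and every term is a convolution of a test function with an element of $L^{p/2}_{\omega_\delta}(\mathbb{R}^3)$ (controlled by the maximal function, Lemma\refer{Lem-1}). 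This is what makes $A_\varepsilon(t)$ a genuine tempered distribution lying in $L^{p/2}_{\omega_\delta}(\mathbb{R}^3)$; then $\Delta A_\varepsilon(t)=0$ forces $A_\varepsilon(t)$ to be a polynomial, the weight kills it, and letting $\varepsilon\to0$ through an approximation of the identity gives $\nabla(\Pi-P)=0$. Your closing remark shows you sensed this issue (``$\Pi$ enters only linearly through the PDE''), but the proof as proposed never executes the step that converts that observation into growth control, and without it the Liouville argument does not close.
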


\begin{proof}
Motivated by the method in \cite{Ja2021}, we first define $P$ given by the expression
\begin{equation}\label{P}
  P= \sum_{i,j=1}^3(-\Delta)^{-1}\partial_i\partial_j(u_iu_j-B_iB_j) -\frac{|\mathbf{B}|^2}{2}.
\end{equation}
Noting $(\mathbf{u}, \mathbf{B})\in M^p_\gamma L^p(0, T)$ for $0< \gamma< \frac{3}{2}$, we apply point ${\rm{(ii)}}$ of Lemma \ref{Lem-1a} to get $(\mathbf{u}, \mathbf{B})\in L^p([0, T]; L^p_{\omega_\delta}(\mathbb{R}^3))$, which gives $(\mathbf{u}\otimes \mathbf{u}, \mathbf{B}\otimes \mathbf{B}, \frac{|\mathbf{B}|^2}{2})\in L^\frac{p}{2}([0, T]; L^\frac{p}{2}_{\omega_\delta}(\mathbb{R}^3))$. Thus, by point ${\rm{(iii)}}$ of Lemma \ref{Lem-1}, one has $P \in L^\frac{p}{2}([0, T]; L^\frac{p}{2}_{\omega_\delta}(\mathbb{R}^3))$ for $\frac{9}{4}< \delta< 3$.

Next, we will show that $\nabla \Pi - \nabla P=0$, where $\Pi$ is the pressure term in \eqref{THMHD}. Let $\varepsilon >0$ (small enough) and $\psi(x) \in C^\infty_0 (\mathbb{R}^3)$. Let $\alpha(t) \in C^\infty_0 (\mathbb{R})$ be
a function such that for $|t|>\varepsilon$, $\alpha(t)=0$. We see that for all $t\in (\varepsilon, T-\varepsilon)$,
$(\alpha\psi)\ast\nabla \Pi \in \mathcal{D}'((\varepsilon, T-\varepsilon)\times\mathbb{R}^3)$ and $(\alpha\psi)\ast\nabla P \in \mathcal{D}'((\varepsilon, T-\varepsilon)\times\mathbb{R}^3)$.
Then, we define $A_\varepsilon (t)$ given by
\begin{equation*}
  A_\varepsilon (t)=(\alpha\psi)\ast\nabla \Pi (\cdot, t)- (\alpha\psi)\ast\nabla P (\cdot, t).
\end{equation*}
We claim that $A_\varepsilon (t)\in \mathcal{S}'(\mathbb{R}^3)$. Indeed, we get by using the vector identity $\operatorname{curl}\mathbf{B}\times\mathbf{B}= -\nabla \frac{|\mathbf{B}|^2}{2}+(\mathbf{B}\cdot\nabla) \mathbf{B}$ and the system \eqref{THMHD} that
\begin{equation*}
  \nabla \Pi =-\partial_t \mathbf{u} + \Delta \mathbf{u} - \operatorname{div}(\mathbf{u}\otimes \mathbf{u})- \operatorname{div}(\mathbf{B}\otimes \mathbf{B}- \frac{1}{2}|\mathbf{B}|^2I).
\end{equation*}
Here $I$ is the identical matrix.
Thus,
\begin{equation}\label{A}
     \begin{split}
  A_\varepsilon (t)&=\big[\big(-(\partial_t \alpha)\psi +\alpha\Delta\psi\big)\ast \mathbf{u}\big](\cdot, t)-\big[(\alpha\ast \nabla \psi)\ast (\mathbf{u}\otimes \mathbf{u})\big](\cdot, t)\\
  &\quad -\big[(\alpha\ast \nabla \psi)\ast (\mathbf{B}\otimes \mathbf{B}- \frac{1}{2}|\mathbf{B}|^2I)\big](\cdot, t) -\big[(\alpha \nabla\psi)\ast P\big](\cdot, t).
     \end{split}
\end{equation}

We successively prove that each term on the right hand side of \eqref{A} belongs to the space $L^\frac{p}{2}_{\omega_\delta}(\mathbb{R}^3)$ for $\frac{9}{4}< \delta< 3$.
For the first term of the right in \eqref{A}, since for $\psi \in C^\infty_0 (\mathbb{R}^3)$ and a test function $f$, we have the pointwise estimate $|(\psi\ast f)(x)|\leq C(\varphi) \mathcal{M}_f (x)$, where $\mathcal{M}$ denotes the Hardy-Littlewood maximal function operator. Then, using point ${\rm{(iii)}}$ of Lemma \ref{Lem-1} implies convolution with test functions is a bounded operator on $L^p_{\omega_\delta}(\mathbb{R}^3)$. Noting that
$\mathbf{u} \in L^p([0, T]; L^p_{\omega_\delta}(\mathbb{R}^3))$, we get $\big[\big(-(\partial_t \alpha)\psi +\alpha\Delta \psi\big)\ast \mathbf{u}\big](\cdot, t)\in L^p_{\omega_\delta}(\mathbb{R}^3)$, which together with
the embedding \eqref{em} gives
$\big[\big(-(\partial_t \alpha)\psi +\alpha\Delta \psi\big)\ast \mathbf{u}\big](\cdot, t)\in L^\frac{p}{2}_{\omega_\delta}(\mathbb{R}^3)$ for $\frac{9}{4}< \delta< 3$.

For the second and third terms of the right in \eqref{A}, since $\mathbf{u}\otimes \mathbf{u}\in L^\frac{p}{2}([0, T]; L^\frac{p}{2}_{\omega_\delta}(\mathbb{R}^3))$
and $(\mathbf{B}\otimes \mathbf{B}, \frac{|\mathbf{B}|^2}{2})\in L^\frac{p}{2}([0, T]; L^\frac{p}{2}_{\omega_\delta}(\mathbb{R}^3))$,
we have $\big[(\alpha\ast \nabla \psi)\ast (\mathbf{u}\otimes \mathbf{u})\big](\cdot, t)\in L^\frac{p}{2}_{\omega_\delta}(\mathbb{R}^3)$ and
$\big[(\alpha\ast \nabla \psi)\ast (\mathbf{B}\otimes \mathbf{B}- \frac{1}{2}|\mathbf{B}|^2I)\big](\cdot, t)\in L^\frac{p}{2}_{\omega_\delta}(\mathbb{R}^3)$. Here we have used the fact convolution with test functions is
a bounded operator on the space $L^\frac{p}{2}_{\omega_\delta}(\mathbb{R}^3)$.

For the fourth term of the right in \eqref{A}, noting $P\in L^\frac{p}{2}([0, T]; L^\frac{p}{2}_{\omega_\delta}(\mathbb{R}^3))$, we have
$\big[(\alpha \nabla\psi)\ast P\big](\cdot, t)\in L^\frac{p}{2}([0, T]; L^\frac{p}{2}_{\omega_\delta}(\mathbb{R}^3))$.

Hence, we have $A_\varepsilon (t)\in L^\frac{p}{2}_{\omega_\delta}(\mathbb{R}^3)$, and then  $A_\varepsilon (t)\in \mathcal{S}'(\mathbb{R}^3)$. In addition, taking the divergence of the first equation in
\eqref{THMHD}, we obtain from $\operatorname{div} \partial_t \mathbf{u}= \operatorname{div} \Delta \mathbf{u}=0$ that
$$\Delta \Pi=-\sum_{i,j=1}^3\partial_i\partial_j(u_iu_j-B_iB_j)- \Delta\frac{|\mathbf{B}|^2}{2},$$
which implies $\Delta(\Pi- P)=0$. Then we have $\Delta A_\varepsilon (t)=0$ and $A_\varepsilon (t)$ is a polynomial from $A_\varepsilon (t)\in \mathcal{S}'(\mathbb{R}^3)$. Noticing
$A_\varepsilon (t)\in L^\frac{p}{2}_{\omega_\delta}(\mathbb{R}^3)$, one has $A_\varepsilon (t)=0$. Moreover, we use the approximation of the identity
$\frac{1}{\varepsilon^4}\alpha\left(\frac{t}{\varepsilon}\right)\psi\left(\frac{x}{\varepsilon}\right)$ to write
\begin{equation*}
     \begin{split}
  \nabla (\Pi- P)(\cdot, t)=\lim_{\varepsilon\rightarrow 0}A_\varepsilon (t)=0.
     \end{split}
\end{equation*}
The proof is completed.
\end{proof}


\renewcommand{\theequation}{\thesection.\arabic{equation}}
\setcounter{equation}{0} 

\section{Caccioppoli type inequalities}
In this section, we derive the Caccioppoli type inequalities, which are the key to our proof.
Let us first give the estimate in the framework of Lebesgue spaces.
\begin{prop}\label{prop-1}
Let $(\mathbf{u}, \mathbf{B})$ be a smooth solution to \eqref{HMHD}. For $3\leq p <+\infty$, if $(\mathbf{u}, \mathbf{B})\in {L^p_{loc}(\mathbb{R}^3)}$ and $\nabla \mathbf{B}\in {L^2(\mathbb{R}^3)}$,
then we have
\begin{equation}\label{p1}
     \begin{split}
     &\int_{B_R}|\nabla \mathbf{u}|^2 \, dx + \int_{B_R}|\operatorname{curl} \mathbf{B}|^2 \, dx \\
     &\leq CR^{1-\frac{6}{p}}\|\mathbf{u}\|^2_{L^p(C_R)}+ C R^{1-\frac{6}{p}}\|\mathbf{B}\|^2_{L^p(C_R)}+ C R^{2-\frac{9}{p}}\|\mathbf{u}\|^3_{L^p(C_R)}+ C R^{-\frac{1}{2}}\|\nabla \mathbf{B}\|^3_{L^2(C_R)}\\
     &\quad  +C R^{2-\frac{9}{p}}\|\mathbf{u}\|_{L^p(C_R)}\|\mathbf{B}\|^2_{L^p(C_R)} + C R^{2-\frac{9}{p}}\|\mathbf{u}\|_{L^p(C_R)}\|\Pi\|^2_{L^\frac{p}{2}(C_R)},
     \end{split}
\end{equation}
where 
the constant $C$ is independent of $R\geq 1$.
\end{prop}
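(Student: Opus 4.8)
The plan is to derive the Caccioppoli inequality by testing the two equations in \eqref{HMHD} against $\varphi^2 \mathbf{u}$ and $\varphi^2 \mathbf{B}$ respectively, where $\varphi$ is a standard cutoff function with $\varphi \equiv 1$ on $B_R$, $\operatorname{supp}\varphi \subset B_{2R}$, and $|\nabla\varphi| \lesssim R^{-1}$, $|\nabla^2\varphi|\lesssim R^{-2}$. First I would integrate by parts. From the velocity equation, $\int \varphi^2|\nabla\mathbf{u}|^2$ appears from the dissipation term, and the remaining terms produce: a term with $\nabla\varphi^2$ contracted against $\nabla\mathbf{u}\otimes\mathbf{u}$ (controlled by $\varepsilon\int\varphi^2|\nabla\mathbf{u}|^2 + C R^{-2}\int_{C_R}|\mathbf{u}|^2$), the convection term $\int \mathbf{u}\otimes\mathbf{u}:\nabla(\varphi^2\mathbf{u})$ which after using $\operatorname{div}\mathbf{u}=0$ reduces to $\int (\mathbf{u}\otimes\mathbf{u}):(\nabla\varphi^2\otimes\mathbf{u})$, the pressure term $\int \Pi\, \mathbf{u}\cdot\nabla\varphi^2$, and the Lorentz force term $\int (\operatorname{curl}\mathbf{B}\times\mathbf{B})\cdot\varphi^2\mathbf{u}$. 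Similarly, testing the magnetic equation against $\varphi^2\mathbf{B}$ yields $\int\varphi^2|\operatorname{curl}\mathbf{B}|^2$ (using $\operatorname{div}\mathbf{B}=0$ so that $\int\varphi^2\operatorname{curl}\mathbf{B}\cdot\operatorname{curl}\mathbf{B}$ plus lower-order commutators recovers $|\nabla\mathbf{B}|^2$ up to cutoff derivatives), a commutator term from moving $\varphi^2$ through the curls (bounded by $\varepsilon\int\varphi^2|\operatorname{curl}\mathbf{B}|^2 + CR^{-2}\int_{C_R}|\mathbf{B}|^2$), the induction term $\int\operatorname{curl}(\mathbf{B}\times\mathbf{u})\cdot\varphi^2\mathbf{B}$, and the Hall term $\int\operatorname{curl}(\operatorname{curl}\mathbf{B}\times\mathbf{B})\cdot\varphi^2\mathbf{B}$.

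The next step is the crucial cancellation: when the velocity and magnetic estimates are added, the Lorentz force contribution $\int(\operatorname{curl}\mathbf{B}\times\mathbf{B})\cdot\varphi^2\mathbf{u}$ and part of the induction term $\int\operatorname{curl}(\mathbf{B}\times\mathbf{u})\cdot\varphi^2\mathbf{B}$ cancel up to terms carrying $\nabla\varphi$ — this is the same algebraic identity underlying the energy equality for MHD. What survives are only the cutoff-derivative error terms, which are of the form $\int |\nabla\varphi^2|\,|\mathbf{B}|\,|\mathbf{u}|\,|\mathbf{B}|$ type. Then I would estimate each leftover term using Hölder's inequality on $C_R$ with the exponents dictated by $p$: terms cubic in the fields like $\int_{C_R}|\nabla\varphi^2|\,|\mathbf{u}|^3$ give $R^{-1}\cdot |C_R|^{1-3/p}\|\mathbf{u}\|^3_{L^p(C_R)} \sim R^{2-9/p}\|\mathbf{u}\|^3_{L^p(C_R)}$ (since $|C_R|\sim R^3$), quadratic terms $R^{-2}\int_{C_R}|\mathbf{u}|^2 \sim R^{-2}R^{3(1-2/p)}\|\mathbf{u}\|^2_{L^p(C_R)} = R^{1-6/p}\|\mathbf{u}\|^2_{L^p(C_R)}$, and the pressure term $\int_{C_R}|\nabla\varphi^2|\,|\mathbf{u}|\,|\Pi|$, bounded via Young's inequality by $C R^{2-9/p}\|\mathbf{u}\|_{L^p(C_R)}\|\Pi\|^2_{L^{p/2}(C_R)}$ after absorbing.

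The main obstacle — and the reason the hypothesis $\nabla\mathbf{B}\in L^2(\mathbb{R}^3)$ enters — is the Hall term $\int\operatorname{curl}(\operatorname{curl}\mathbf{B}\times\mathbf{B})\cdot\varphi^2\mathbf{B}$. Integrating by parts moves one curl onto $\varphi^2\mathbf{B}$: one gets $\int(\operatorname{curl}\mathbf{B}\times\mathbf{B})\cdot\operatorname{curl}(\varphi^2\mathbf{B}) = \int(\operatorname{curl}\mathbf{B}\times\mathbf{B})\cdot(\nabla\varphi^2\times\mathbf{B}) + \int\varphi^2(\operatorname{curl}\mathbf{B}\times\mathbf{B})\cdot\operatorname{curl}\mathbf{B}$. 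The second integral vanishes since $\operatorname{curl}\mathbf{B}\times\mathbf{B}\perp\operatorname{curl}\mathbf{B}$ pointwise, so only $\int(\operatorname{curl}\mathbf{B}\times\mathbf{B})\cdot(\nabla\varphi^2\times\mathbf{B})$ remains, which is bounded by $C R^{-1}\int_{C_R}|\operatorname{curl}\mathbf{B}|\,|\mathbf{B}|^2$. Here one cannot use the $M^p_\gamma$ norm of $\nabla\mathbf{B}$ since it is not assumed; instead, applying Hölder with $|\operatorname{curl}\mathbf{B}|\in L^2$ and $|\mathbf{B}|^2\in L^2(C_R)$... but that is not quite dimensionally matched either. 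The cleaner route, which I expect the authors take, is: bound $R^{-1}\int_{C_R}|\operatorname{curl}\mathbf{B}|\,|\mathbf{B}|^2 \le R^{-1}\|\operatorname{curl}\mathbf{B}\|_{L^2(C_R)}\|\mathbf{B}\|^2_{L^4(C_R)}$ and then use the Gagliardo–Nirenberg-type control or simply interpolate $\|\mathbf{B}\|_{L^4(C_R)}$ against $\|\nabla\mathbf{B}\|_{L^2}$; more plausibly they estimate $R^{-1}\int_{C_R}|\operatorname{curl}\mathbf{B}|^3$-type quantities leading to the term $C R^{-1/2}\|\nabla\mathbf{B}\|^3_{L^2(C_R)}$ that appears in \eqref{p1} (the exponent $-1/2$ comes from $R^{-1}\cdot R^{3\cdot(1/2 - 1/2\cdot?)}$ scaling with $\|\nabla\mathbf{B}\|_{L^2}$ being scale-critical-ish, giving a clean power). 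I would carefully track this one term, choose $\varepsilon$ small to absorb all $\varepsilon\int\varphi^2(|\nabla\mathbf{u}|^2+|\operatorname{curl}\mathbf{B}|^2)$ contributions into the left side, and note that $\int_{B_R}|\nabla\mathbf{u}|^2 + \int_{B_R}|\operatorname{curl}\mathbf{B}|^2 \le \int\varphi^2(|\nabla\mathbf{u}|^2+|\operatorname{curl}\mathbf{B}|^2)$ to conclude \eqref{p1}, with all constants manifestly independent of $R\ge 1$.
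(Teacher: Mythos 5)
Your overall strategy coincides with the paper's: test the two equations against $\varphi_R^2\mathbf{u}$ and $\varphi_R^2\mathbf{B}$, use the triple-product identity $(\operatorname{curl}\mathbf{B}\times\mathbf{B})\cdot\mathbf{u}=(\mathbf{B}\times\mathbf{u})\cdot\operatorname{curl}\mathbf{B}$ to cancel the Lorentz force against the main part of the induction term, reduce the Hall term by one integration by parts and the pointwise orthogonality $(\operatorname{curl}\mathbf{B}\times\mathbf{B})\perp\operatorname{curl}\mathbf{B}$, and estimate every surviving cutoff term by H\"older on the annulus $C_R$. All of that matches, up to cosmetic differences (the paper integrates the dissipation cutoff term by parts once more to get $\tfrac12\int|\mathbf{u}|^2\Delta(\varphi_R^2)$ instead of your $\varepsilon$-absorption, and it keeps $|\operatorname{curl}\mathbf{B}|^2$ rather than $|\nabla\mathbf{B}|^2$ on the left).

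However, you leave the one genuinely delicate estimate unfinished: the Hall remainder $R^{-1}\int_{C_R}|\operatorname{curl}\mathbf{B}|\,|\mathbf{B}|^2\,dx$. You correctly isolate it and correctly guess that it should produce the term $CR^{-1/2}\|\nabla\mathbf{B}\|_{L^2}^3$, but your proposed routes (``interpolate $\|\mathbf{B}\|_{L^4(C_R)}$ against $\|\nabla\mathbf{B}\|_{L^2}$'', ``estimate $R^{-1}\int_{C_R}|\operatorname{curl}\mathbf{B}|^3$-type quantities'') are not derivations, and your scaling bookkeeping literally contains an unresolved exponent. The paper's argument is a single H\"older application with exponents $\tfrac12+\tfrac13+\tfrac16=1$, namely $R^{-1}\int_{C_R}|\operatorname{curl}\mathbf{B}|\,|\mathbf{B}|^2\,dx\le R^{-1}\|\operatorname{curl}\mathbf{B}\|_{L^2(C_R)}\,\||\mathbf{B}|^2\|_{L^3(C_R)}\,\|1\|_{L^6(C_R)}\le CR^{-1}R^{1/2}\|\nabla\mathbf{B}\|_{L^2}\|\mathbf{B}\|_{L^6}^2$, followed by the Sobolev embedding $\dot H^1(\mathbb{R}^3)\hookrightarrow L^6(\mathbb{R}^3)$ applied globally to $\mathbf{B}$ (this is exactly where the hypothesis $\nabla\mathbf{B}\in L^2(\mathbb{R}^3)$ is used, and why no local Morrey information on $\mathbf{B}$ suffices here); see \eqref{3-8}. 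You should supply this step explicitly. A secondary point: your claim that the pressure term is ``bounded via Young's inequality by $CR^{2-9/p}\|\mathbf{u}\|_{L^p(C_R)}\|\Pi\|^2_{L^{p/2}(C_R)}$ after absorbing'' is not a valid step --- there is nothing to absorb a $\|\Pi\|^2$ into, and plain H\"older gives $CR^{2-9/p}\|\mathbf{u}\|_{L^p(C_R)}\|\Pi\|_{L^{p/2}(C_R)}$ with the \emph{first} power of the pressure norm, which is what the paper's own proof derives (the square in the displayed statement \eqref{p1} appears to be a typo); you were reverse-engineering the typo rather than proving an estimate.
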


\begin{proof}
First, we introduce the following cut-off function: let $\varphi \in C_0^\infty (\mathbb{R}^3)$ be a nonincreasing radial function such that
\begin{equation*}
\varphi(|x|)=
    \begin{cases}
        1, \quad \mathrm{if}\,  |x|< 1, \\
        [0, 1], \quad \mathrm{if}\,  1\leq|x|\leq 2,\\
        0, \quad \mathrm{if}\,  |x|> 2.
    \end{cases}
\end{equation*}
Define $\varphi_R(x)=\varphi(\frac{|x|}{R})$  for each given $R$. Then the function $\varphi_R(x)$ satisfies
\begin{equation*}
     \begin{split}
   \|\nabla^k \varphi_R \|_{L^\infty} \leq C R^{-k}
     \end{split}
\end{equation*}
for $k=0,1,2$ with some positive constant $C$.

Taking the $L^2$- inner product of the $u$ equation of $\eqref{HMHD}$ with $\mathbf{u}\varphi_R^2$ and integrating by parts over $\mathbb{R}^3$, we obtain
\begin{equation}\label{3-1}
     \begin{split}
   \int_{\mathbb{R}^3}|\nabla \mathbf{u}|^2 \varphi_R^2\, dx &= -2\int_{\mathbb{R}^3} \varphi_R\nabla \mathbf{u} : (\mathbf{u}\otimes \nabla \varphi_R) \, dx
   -\int_{\mathbb{R}^3}(\mathbf{u}\cdot \nabla) \mathbf{u}\cdot \mathbf{u}\varphi_R^2 \, dx\\
  & \quad - \int_{\mathbb{R}^3}\nabla \Pi\cdot \mathbf{u}\varphi_R^2\, dx
   +\int_{\mathbb{R}^3} \operatorname{curl} \mathbf{B}\times \mathbf{B}\cdot \mathbf{u}\varphi_R^2 \, dx.
     \end{split}
\end{equation}
Similarly, taking the $L^2$- inner product of the $B$ equation of $\eqref{HMHD}$ with $\mathbf{B}\varphi_R^2$ and integrating by parts over $\mathbb{R}^3$, we have
\begin{equation}\label{3-2}
     \begin{split}
   &\int_{\mathbb{R}^3}|\operatorname{curl} \mathbf{B}|^2 \varphi_R^2 \, dx = -2\int_{\mathbb{R}^3} \varphi_R \operatorname{curl}\mathbf{B}\cdot (\nabla\varphi_R \times \mathbf{B})\, dx
   -\int_{\mathbb{R}^3} (\mathbf{B}\times \mathbf{u})\cdot (\varphi_R^2\operatorname{curl} \mathbf{B})\, dx \\
   &\qquad -2\int_{\mathbb{R}^3}\varphi_R(\mathbf{B}\times \mathbf{u})\cdot( \nabla\varphi_R \times\mathbf{B})\, dx- 2\int_{\mathbb{R}^3}\varphi_R(\operatorname{curl} \mathbf{B}\times\mathbf{B})\cdot ( \nabla\varphi_R \times\mathbf{B}) \, dx,
     \end{split}
\end{equation}
where we have used the fact $\Delta \mathbf{B}= \nabla \operatorname{div}\mathbf{B}-\operatorname{curl}(\operatorname{curl}\mathbf{B}).$

Combining \eqref{3-1} and \eqref{3-2},  it gives
\begin{equation}\label{3-3}
     \begin{split}
   &\int_{\mathbb{R}^3}|\nabla \mathbf{u}|^2 \varphi_R^2\, dx + \int_{\mathbb{R}^3}|\operatorname{curl} \mathbf{B}|^2 \varphi_R^2 \, dx \\
   &= -2 \int_{\mathbb{R}^3} \varphi_R \nabla \mathbf{u} : (\mathbf{u}\otimes \nabla \varphi_R) \, dx
   -\int_{\mathbb{R}^3}(\mathbf{u}\cdot \nabla) \mathbf{u}\cdot \mathbf{u}\varphi_R^2 \, dx\\
   &\quad -2\int_{\mathbb{R}^3} \varphi_R \operatorname{curl}\mathbf{B}\cdot(\nabla\varphi_R \times \mathbf{B}) \, dx
   -2 \int_{\mathbb{R}^3} \varphi_R(\mathbf{B}\times \mathbf{u})\cdot( \nabla\varphi_R \times\mathbf{B})\, dx\\
   &\quad -2\int_{\mathbb{R}^3} \varphi_R(\operatorname{curl} \mathbf{B}\times\mathbf{B})\cdot ( \nabla\varphi_R\times\mathbf{B}) \, dx- \int_{\mathbb{R}^3}\nabla \Pi\cdot \mathbf{u}\varphi_R^2\, dx\\
   &=\sum_{i=1}^6 I_i.
     \end{split}
\end{equation}

In what follows, we estimate each term on the right-hand side of \eqref{3-3} separately. For $I_1$, it can be deduced from the integration by parts that
\begin{equation*}
     \begin{split}
   I_1
   &=\int_{\mathbb{R}^3} \mathbf{u}\cdot\operatorname{div}(\mathbf{u} \otimes \nabla (\varphi_R^2)) \, dx\\
   &=\int_{\mathbb{R}^3} \mathbf{u}\cdot (\nabla \mathbf{u} \cdot \nabla (\varphi_R^2)+\mathbf{u} \Delta (\varphi_R^2)) \, dx\\
   &=\frac{1}{2}\int_{\mathbb{R}^3}|\mathbf{u}|^2 \Delta (\varphi_R^2) \, dx\\
    &=\int_{\mathbb{R}^3}|\mathbf{u}|^2 (\varphi_R\Delta \varphi_R+|\nabla \varphi_R|^2) \, dx.
     \end{split}
\end{equation*}
Applying the H\"{o}lder inequality implies
\begin{equation*}\label{3-4}
     \begin{split}
   |I_1|&\leq \int_{C_R}|\mathbf{u}|^2 (|\varphi_R\Delta \varphi_R|+|\nabla \varphi_R|^2) \, dx\\
   &\leq C  R^{-2}\||\mathbf{u}|^2\|_{L^{\frac{p}{2}}(C_R)}\|1\|_{L^{\frac{p}{p-2}}(C_R)}\\
   &\leq C  R^{1-\frac{6}{p}}\|\mathbf{u}\|^2_{L^p(C_R)}.
     \end{split}
\end{equation*}
For $I_2$, using integration by parts, we obtain from $\operatorname{div} \mathbf{u}=0$ that
\begin{equation*}
     \begin{split}
   I_2&=-\int_{\mathbb{R}^3} (\mathbf{u}\cdot \nabla) \mathbf{u}\cdot (\mathbf{u}\varphi_R^2) \, dx
   =-\frac{1}{2}\int_{\mathbb{R}^3} \mathbf{u}\cdot \nabla |\mathbf{u}|^2 \varphi_R^2  \, dx\\
   &=\frac{1}{2}\int_{\mathbb{R}^3} |\mathbf{u}|^2\operatorname{div}(\mathbf{u}\varphi_R^2)  \, dx
  =\int_{\mathbb{R}^3}|\mathbf{u}|^2\mathbf{u}\cdot (\varphi_R\nabla \varphi_R)  \, dx.
     \end{split}
\end{equation*}
Thus,
\begin{equation*}\label{3-5}
     \begin{split}
   |I_2|&\leq  \int_{C_R}|\mathbf{u}|^2|\mathbf{u}||\varphi_R\nabla \varphi_R| \, dx\\
   &\leq CR^{-1}\||\mathbf{u}|^2\|_{L^{\frac{p}{2}}(C_R)}\|\mathbf{u}\|_{L^{p}(C_R)}\|1\|_{L^{\frac{p}{p-3}}(C_R)}\\
   &\leq CR^{2-\frac{9}{p}}\|\mathbf{u}\|_{L^p(C_R)}^3.
     \end{split}
\end{equation*}
For $I_3$, using Young's inequality, we get
\begin{equation*}\label{3-6}
     \begin{split}
   |I_3|&\leq \int_{C_R} |\varphi_R\operatorname{curl}\mathbf{B}||\mathbf{B}||\nabla \varphi_R| \, dx\\
   &\leq CR^{-1}\|\varphi_R\operatorname{curl}\mathbf{B}\|_{L^2(C_R)}\|\mathbf{B}\|_{L^p(C_R)}\|1\|_{L^\frac{2p}{p-2}(C_R)}\\
   &\leq CR^{\frac{1}{2}-\frac{3}{p}}\|\varphi_R\operatorname{curl}\mathbf{B}\|_{L^2(C_R)}\|\mathbf{B}\|_{L^p(C_R)}\\
   &\leq \frac{1}{2}\|\varphi_R\operatorname{curl}\mathbf{B}\|_{L^2(C_R)}^2+CR^{1-\frac{6}{p}}\|\mathbf{B}\|^2_{L^p(C_R)}.
     \end{split}
\end{equation*}
Again, applying the H\"{o}lder inequality gives
\begin{equation*}\label{3-7}
     \begin{split}
   |I_4|&\leq CR^{-1}\int_{C_R} |\mathbf{B}|^2|\mathbf{u}| \, dx\\
   &\leq CR^{-1}\||\mathbf{B}|^2\|_{L^\frac{p}{2}(C_R)}\|\mathbf{u}\|_{L^p(C_R)}\|1\|_{L^\frac{p}{p-3}(C_R)}\\
   &\leq CR^{2-\frac{9}{p}}\|\mathbf{B}\|_{L^p(C_R)}^2\|\mathbf{u}\|_{L^p(C_R)}.
     \end{split}
\end{equation*}
For $I_5$, using the embedding $\dot{H}^1(\mathbb{R}^3)\hookrightarrow L^6(\mathbb{R}^3)$ implies
\begin{equation}\label{3-8}
     \begin{split}
   |I_5|&\leq CR^{-1}\int_{C_R} |\varphi_R\operatorname{curl}\mathbf{B}||\mathbf{B}|^2 \, dx\\
   &\leq CR^{-1}\|\operatorname{curl}\mathbf{B}\|_{L^2(C_R)}\||\mathbf{B}|^2\|_{L^3(C_R)}\|1\|_{L^6(C_R)}\\
   &\leq CR^{-\frac{1}{2}}\|\nabla\mathbf{B}\|_{L^2(C_R)}\|\mathbf{B}\|_{L^6(C_R)}^2\\
   &\leq CR^{-\frac{1}{2}}\|\nabla\mathbf{B}\|_{L^2}^3.
     \end{split}
\end{equation}
Finally, for $I_6$, using integration by parts and the fact $\operatorname{div} \mathbf{u}=0$, we  get
\begin{equation*}
     \begin{split}
   I_6
   &=\int_{\mathbb{R}^3}\Pi\operatorname{div}(\mathbf{u} \varphi_R^2) \, dx\\
   &=\int_{\mathbb{R}^3}\Pi\operatorname{div}\mathbf{u} \varphi_R^2\, dx+2\int_{\mathbb{R}^3}\varphi_R\Pi\mathbf{u}\cdot\nabla\varphi_R\, dx\\
   &=2\int_{\mathbb{R}^3} \varphi_R\Pi\mathbf{u}\cdot\nabla\varphi_R\, dx.
     \end{split}
\end{equation*}
Then,
\begin{equation*}\label{3-9}
     \begin{split}
   |I_6|&\leq CR^{-1}\int_{C_R} |\Pi||\mathbf{u}|\, dx\\
   &\leq CR^{-1}\|\Pi\|_{L^\frac{p}{2}(C_R)}\|\mathbf{u}\|_{L^p(C_R)}\|1\|_{L^\frac{p}{p-3}(C_R)}\\
   &\leq CR^{2-\frac{9}{p}}\|\Pi\|_{L^\frac{p}{2}(C_R)}\|\mathbf{u}\|_{L^p(C_R)}.\\
     \end{split}
\end{equation*}

Consequently, substituting the estimates $I_i$ for $i=1,2, ... ,6$ into \eqref{3-3}, we deduce that
\begin{equation*}\label{3-10}
     \begin{split}
   &\int_{B_R}|\nabla \mathbf{u}|^2\, dx + \frac{1}{2}\int_{B_R}|\operatorname{curl} \mathbf{B}|^2 \, dx\\
   &\leq C R^{1-\frac{6}{p}}\|\mathbf{u}\|^2_{L^p(C_R)}+ CR^{1-\frac{6}{p}}\|\mathbf{B}\|^2_{L^p(C_R)}+ CR^{2-\frac{9}{p}}\|\mathbf{u}\|^3_{L^p(C_R)}+CR^{2-\frac{9}{p}}\|\mathbf{u}\|_{L^p(C_R)}\|\mathbf{B}\|^2_{L^p(C_R)}\\
   &\qquad + CR^{-\frac{1}{2}}\|\nabla \mathbf{B}\|^3_{L^2(C_R)}
   +CR^{2-\frac{9}{p}}\|\mathbf{u}\|_{L^p(C_R)}\|\Pi\|_{L^\frac{p}{2}(C_R)}
     \end{split}
\end{equation*}
for all $R\geq 1$ and the constant $C$ independent of $R$. This completes the proof of Proposition \ref{prop-1}.
\end{proof}

Next, in the framework of local Morrey spaces, the Caccioppoli type estimate is stated as follows:
\begin{prop}\label{prop-2}
Let $(\mathbf{u}, \mathbf{B})$ be a smooth solution to \eqref{HMHD}. For $0< \gamma< 3 \leq p <+\infty$, if $(\mathbf{u}, \mathbf{B})\in {M^p_\gamma (\mathbb{R}^3)}$ and $\nabla \mathbf{B}\in {L^2(\mathbb{R}^3)}$,
then we have
\begin{equation}\label{p2}
     \begin{split}
     &\int_{B_R}|\nabla \mathbf{u}|^2 \, dx + \int_{B_R}|\operatorname{curl} \mathbf{B}|^2 \, dx \\
     &\leq C R^{3\eta}(R^{-\gamma}\int_{C_R}|\mathbf{u}|^p \,dx)^{\frac{3}{p}}+ C R^{-\frac{1}{3}}R^{2\eta}(R^{-\gamma}\int_{C_R}|\mathbf{u}|^p \,dx)^{\frac{2}{p}}
     + C R^{-\frac{1}{2}}\|\nabla \mathbf{B}\|^3_{L^2(C_R)}\\
     &\quad +C R^{3\eta}(R^{-\gamma}\int_{C_R}|\mathbf{B}|^p \,dx)^{\frac{2}{p}}(R^{-\gamma}\int_{C_R}|\mathbf{u}|^p \,dx)^{\frac{1}{p}}+C R^{-\frac{1}{3}}R^{2\eta}(R^{-\gamma}\int_{C_R}|\mathbf{B}|^p \,dx)^{\frac{2}{p}}\\
     &\quad +
      C R^{3\eta}(R^{-\gamma}\int_{C_R}|\mathbf{u}|^p \,dx)^{\frac{1}{p}}(\|\mathbf{u}\|^2_{M^p_\gamma(\mathbb{R}^3)}+\|\mathbf{B}\|^2_{M^p_\gamma(\mathbb{R}^3)}),
     \end{split}
\end{equation}
where 
the constant $C$ is independent of $R\geq 1$.
\end{prop}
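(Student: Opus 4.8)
The plan is to derive \eqref{p2} from the Lebesgue-space Caccioppoli inequality \eqref{p1} of Proposition \ref{prop-1} by a rescaling on the annuli $C_R$, the only genuinely new ingredient being the control of the pressure term in local Morrey spaces, which is where the hypotheses $3\le p<+\infty$ and $0<\gamma<3$ enter.

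First, since $(\mathbf{u},\mathbf{B})\in M^p_\gamma(\mathbb{R}^3)$ forces $(\mathbf{u},\mathbf{B})\in L^p_{loc}(\mathbb{R}^3)$ and $\nabla\mathbf{B}\in L^2(\mathbb{R}^3)$ is given, Proposition \ref{prop-1} applies and yields \eqref{p1} for every $R\ge 1$. For fixed $R\ge 1$ we use the trivial identity
\begin{equation*}
\|\mathbf{u}\|_{L^p(C_R)}=R^{\gamma/p}\Big(R^{-\gamma}\int_{C_R}|\mathbf{u}|^p\,dx\Big)^{1/p},
\end{equation*}
and the analogue for $\mathbf{B}$. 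Inserting these into the $\mathbf{u}$-, $\mathbf{B}$- and mixed terms on the right-hand side of \eqref{p1} and recalling $\eta=\eta(p,\gamma)=\gamma/p+2/3-3/p$, the powers of $R$ reorganize according to the two elementary identities $1-6/p+2\gamma/p=2\eta-1/3$ and $2-9/p+3\gamma/p=3\eta$; for instance $R^{1-6/p}\|\mathbf{u}\|_{L^p(C_R)}^2=R^{-1/3}R^{2\eta}\big(R^{-\gamma}\int_{C_R}|\mathbf{u}|^p\big)^{2/p}$ and $R^{2-9/p}\|\mathbf{u}\|_{L^p(C_R)}^3=R^{3\eta}\big(R^{-\gamma}\int_{C_R}|\mathbf{u}|^p\big)^{3/p}$, and likewise for $R^{1-6/p}\|\mathbf{B}\|_{L^p(C_R)}^2$ and for $R^{2-9/p}\|\mathbf{u}\|_{L^p(C_R)}\|\mathbf{B}\|_{L^p(C_R)}^2$; the term $CR^{-1/2}\|\nabla\mathbf{B}\|_{L^2(C_R)}^3$ is left untouched. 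This already reproduces every term of \eqref{p2} except the last one.

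It remains to handle the pressure contribution, which arises from the estimate of $I_6$ in the proof of Proposition \ref{prop-1} as $CR^{2-9/p}\|\mathbf{u}\|_{L^p(C_R)}\|\Pi\|_{L^{p/2}(C_R)}$. Viewing the stationary solution as a $t$-independent solution of \eqref{THMHD} on some finite interval $[0,T]$, so that $(\mathbf{u},\mathbf{B})\in M^p_\gamma L^p(0,T)$ (admissible since $p\ge 3>2$), Proposition \ref{prop-1a} gives $\nabla\Pi=\nabla P$ with $P=\sum_{i,j=1}^3(-\Delta)^{-1}\partial_i\partial_j(u_iu_j-B_iB_j)-\tfrac12|\mathbf{B}|^2$. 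The additive constant distinguishing $\Pi$ from $P$ is harmless, since in $I_6=2\int_{\mathbb{R}^3}\varphi_R\Pi\,\mathbf{u}\cdot\nabla\varphi_R\,dx$ replacing $\Pi$ by $\Pi+c$ changes $I_6$ by $c\int_{\mathbb{R}^3}\mathbf{u}\cdot\nabla(\varphi_R^2)\,dx=-c\int_{\mathbb{R}^3}(\operatorname{div}\mathbf{u})\varphi_R^2\,dx=0$; hence we may take $\Pi=P$. Since $p\ge 3$ forces $p/2\ge 3/2>1$ and $0<\gamma<3$, point {\rm{(i)}} of Lemma \ref{Lem-1} applies with exponent $p/2$, and together with the pointwise bounds $|u_iu_j|^{p/2}\le|\mathbf{u}|^p$ and $|B_iB_j|^{p/2}\le|\mathbf{B}|^p$ (so that $\|u_iu_j\|_{M^{p/2}_\gamma}\le\|\mathbf{u}\|^2_{M^p_\gamma}$, etc.) it yields $\|\Pi\|_{M^{p/2}_\gamma(\mathbb{R}^3)}\le C(\|\mathbf{u}\|^2_{M^p_\gamma(\mathbb{R}^3)}+\|\mathbf{B}\|^2_{M^p_\gamma(\mathbb{R}^3)})$. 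Consequently $\|\Pi\|_{L^{p/2}(C_R)}=R^{2\gamma/p}\big(R^{-\gamma}\int_{C_R}|\Pi|^{p/2}\big)^{2/p}\le CR^{2\gamma/p}(\|\mathbf{u}\|^2_{M^p_\gamma}+\|\mathbf{B}\|^2_{M^p_\gamma})$, and substituting this together with the identity for $\|\mathbf{u}\|_{L^p(C_R)}$ the exponent of $R$ collapses to $2-9/p+\gamma/p+2\gamma/p=3\eta$, giving precisely the last term of \eqref{p2}. Adding the contributions proves \eqref{p2}.

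The main obstacle is this last step: running the pressure representation of Proposition \ref{prop-1a} and the Calder\'on--Zygmund estimate of Lemma \ref{Lem-1} at the exponent $p/2$ instead of $p$. This is legitimate exactly when $p/2>1$ and $0<\gamma<3$, i.e.\ for $3\le p<+\infty$ and $0<\gamma<3$, which is precisely the range assumed in the proposition; all the remaining work is the elementary rescaling on $C_R$ and checking the two identities relating the exponents of $R$ to $\eta$.
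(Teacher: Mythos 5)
Your proposal is correct and follows essentially the same route as the paper: apply the Lebesgue-space Caccioppoli inequality of Proposition \ref{prop-1}, rewrite the annulus norms via $\|\mathbf{u}\|_{L^p(C_R)}=R^{\gamma/p}\bigl(R^{-\gamma}\int_{C_R}|\mathbf{u}|^p\,dx\bigr)^{1/p}$ together with the exponent identities $2-9/p+3\gamma/p=3\eta$ and $1-6/p+2\gamma/p=2\eta-1/3$, and control the pressure by combining the representation $\nabla\Pi=\nabla P$ from Proposition \ref{prop-1a} with the Calder\'on--Zygmund bound of Lemma \ref{Lem-1} at exponent $p/2$ to get $\|P\|_{M^{p/2}_\gamma}\le C(\|\mathbf{u}\|^2_{M^p_\gamma}+\|\mathbf{B}\|^2_{M^p_\gamma})$. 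Your added remarks (the harmlessness of the additive constant in $I_6$ and the explicit reduction of the stationary case to the time-dependent pressure characterization) are details the paper leaves implicit, and you correctly work with the unsquared pressure factor $\|\Pi\|_{L^{p/2}(C_R)}$, consistent with the paper's actual estimate of $I_6$ rather than the typo in the displayed form of \eqref{p1}.
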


\begin{proof}
Using Proposition \ref{prop-1a}, we know that $\nabla \Pi =\nabla P$, where $P$ is given in \eqref{P}. Thus, we consider the system
\eqref{HMHD} with the term $\nabla P$ instead of the term $\nabla \Pi$. Since $(\mathbf{u}, \mathbf{B})\in {M^p_\gamma (\mathbb{R}^3)}$, one has $(\mathbf{u}, \mathbf{B})\in {L^p_{loc}(\mathbb{R}^3)}$.
Then we obtain from Proposition \ref{prop-1} that for $3 \leq p <+\infty$,
\begin{equation}\label{p11}
     \begin{split}
     &\int_{B_R}|\nabla \mathbf{u}|^2 \, dx + \int_{B_R}|\operatorname{curl} \mathbf{B}|^2 \, dx \\
     &\leq CR^{1-\frac{6}{p}}\|\mathbf{u}\|^2_{L^p(C_R)}+ C R^{1-\frac{6}{p}}\|\mathbf{B}\|^2_{L^p(C_R)}+ C R^{2-\frac{9}{p}}\|\mathbf{u}\|^3_{L^p(C_R)}+ C R^{-\frac{1}{2}}\|\nabla \mathbf{B}\|^3_{L^2(C_R)}\\
     &\quad  +C R^{2-\frac{9}{p}}\|\mathbf{u}\|_{L^p(C_R)}\|\mathbf{B}\|^2_{L^p(C_R)} + C R^{2-\frac{9}{p}}\|\mathbf{u}\|_{L^p(C_R)}\| P\|^2_{L^\frac{p}{2}(C_R)}.
     \end{split}
\end{equation}

A direct computation gives
\begin{equation*}
     \begin{split}
   R^{2-\frac{9}{p}}\|\mathbf{u}\|_{L^p(C_R)}^3&=R^{-\frac{2}{p}\gamma}R^{\frac{2}{p}\gamma+2-\frac{9}{p}}(\int_{C_R}|\mathbf{u}|^p \,dx)^{\frac{2}{p}}(\int_{C_R}|\mathbf{u}|^p \,dx)^{\frac{1}{p}}\\
   &=R^{\frac{3}{p}\gamma+2-\frac{9}{p}}(R^{-\gamma}\int_{C_R}|\mathbf{u}|^p \,dx)^{\frac{2}{p}}(R^{-\gamma}\int_{C_R}|\mathbf{u}|^p \,dx)^{\frac{1}{p}}.
     \end{split}
\end{equation*}
Similarly, we obtain
\begin{equation*}
     \begin{split}
   &R^{1-\frac{6}{p}}\|\mathbf{u}\|_{L^p(C_R)}^2 + R^{1-\frac{6}{p}}\|\mathbf{B}\|_{L^p(C_R)}^2 \\
   &=R^{-\frac{1}{3}-\frac{2}{p}\gamma}R^{\frac{2}{p}\gamma+\frac{4}{3}-\frac{6}{p}}(\int_{C_R}|\mathbf{u}|^p \,dx)^{\frac{2}{p}}
   +R^{-\frac{1}{3}-\frac{2}{p}\gamma}R^{\frac{2}{p}\gamma+\frac{4}{3}-\frac{6}{p}}(\int_{C_R}|\mathbf{B}|^p \,dx)^{\frac{2}{p}}\\
   &=R^{-\frac{1}{3}}R^{\frac{2}{p}\gamma+\frac{4}{3}-\frac{6}{p}}(R^{-\gamma}\int_{C_R}|\mathbf{u}|^p \,dx)^{\frac{2}{p}}+ R^{-\frac{1}{3}}R^{\frac{2}{p}\gamma+\frac{4}{3}-\frac{6}{p}}(R^{-\gamma}\int_{C_R}|\mathbf{B}|^p \,dx)^{\frac{2}{p}}
     \end{split}
\end{equation*}
and
\begin{equation*}
     \begin{split}
   R^{2-\frac{9}{p}}\|\mathbf{B}\|_{L^p(C_R)}^2\|\mathbf{u}\|_{L^p(C_R)}&=R^{\frac{2}{p}\gamma+2-\frac{9}{p}}(R^{-\gamma}\int_{C_R}|\mathbf{B}|^p \,dx)^{\frac{2}{p}}(\int_{C_R}|\mathbf{u}|^p \,dx)^{\frac{1}{p}}\\
   &=R^{\frac{3}{p}\gamma+2-\frac{9}{p}}(R^{-\gamma}\int_{C_R}|\mathbf{B}|^p \,dx)^{\frac{2}{p}}(R^{-\gamma}\int_{C_R}|\mathbf{u}|^p \,dx)^{\frac{1}{p}}.
     \end{split}
\end{equation*}

Finally, we are in a position to estimate $R^{2-\frac{9}{p}}\|\mathbf{u}\|_{L^p(C_R)}\|P\|^2_{L^\frac{p}{2}(C_R)}$.
Noticing that the term $P$ is defined through $\mathbf{u}$ and $\mathbf{B}$ in \eqref{P}, we deduce from point ${\rm{(i)}}$ of Lemma \ref{Lem-1} that  for all $0<\gamma<3\leq p<+\infty$,
\begin{equation*}
     \begin{split}
   \|P\|_{M^\frac{p}{2}_\gamma(\mathbb{R}^3)}\leq C(p, \gamma)(\|\mathbf{u}\|^2_{M^p_\gamma(\mathbb{R}^3)}+\|\mathbf{B}\|^2_{M^p_\gamma(\mathbb{R}^3)}).
     \end{split}
\end{equation*}
Moreover, recalling the definition of $\|\cdot\|_{M^p_\gamma(\mathbb{R}^3)}$, see \eqref{def-2}, we get
\begin{equation*}
     \begin{split}
   R^{2-\frac{9}{p}}\| P\|_{L^\frac{p}{2}(C_R)}\|\mathbf{u}\|_{L^p(C_R)}&=R^{\frac{3}{p}\gamma+2-\frac{9}{p}}(R^{-\gamma}\int_{C_R}|\mathbf{u}|^p \,dx)^{\frac{1}{p}}(R^{-\gamma}\int_{C_R}| P|^\frac{p}{2} \,dx)^{\frac{2}{p}}\\
   &\leq R^{\frac{3}{p}\gamma+2-\frac{9}{p}}(R^{-\gamma}\int_{C_R}|\mathbf{u}|^p \,dx)^{\frac{1}{p}} \| P\|_{M^\frac{p}{2}_\gamma(\mathbb{R}^3)}\\
   &\leq CR^{\frac{3}{p}\gamma+2-\frac{9}{p}}(R^{-\gamma}\int_{C_R}|\mathbf{u}|^p \,dx)^{\frac{1}{p}}(\|\mathbf{u}\|^2_{M^p_\gamma(\mathbb{R}^3)}+\|\mathbf{B}\|^2_{M^p_\gamma(\mathbb{R}^3)}).
     \end{split}
\end{equation*}

Inserting all the above estimates into \eqref{p11}, we complete the proof of Proposition \ref{prop-2}.
\end{proof}


\renewcommand{\theequation}{\thesection.\arabic{equation}}
\setcounter{equation}{0} 

\section{The Proofs of Theorems}

With the help of Proposition \ref{prop-2}, we can now complete the proofs of Theorems 1 and 2.
For simplicity, we denote
\begin{equation*}
     \begin{split}
D_1:&= R^{3\eta}(R^{-\gamma}\int_{C_R}|\mathbf{u}|^p \,dx)^{\frac{3}{p}}+ R^{-\frac{1}{3}}R^{2\eta}(R^{-\gamma}\int_{C_R}|\mathbf{u}|^p \,dx)^{\frac{2}{p}}
     +  R^{-\frac{1}{2}}\|\nabla \mathbf{B}\|^3_{L^2(C_R)}\\
    &+ R^{3\eta}(R^{-\gamma}\int_{C_R}|\mathbf{B}|^p \,dx)^{\frac{2}{p}}(R^{-\gamma}\int_{C_R}|\mathbf{u}|^p \,dx)^{\frac{1}{p}}+ R^{-\frac{1}{3}}R^{2\eta}(R^{-\gamma}\int_{C_R}|\mathbf{B}|^p \,dx)^{\frac{2}{p}}\\
    &+ R^{3\eta}(R^{-\gamma}\int_{C_R}|\mathbf{u}|^p \,dx)^{\frac{1}{p}}(\|\mathbf{u}\|^2_{M^p_\gamma(\mathbb{R}^3)}+\|\mathbf{B}\|^2_{M^p_\gamma(\mathbb{R}^3)}).
     \end{split}
\end{equation*}
Then the inequality \eqref{p2} writes the following
\begin{equation}\label{4-1}
     \begin{split}
     \int_{B_R}|\nabla \mathbf{u}|^2 \, dx + \int_{B_R}|\operatorname{curl} \mathbf{B}|^2 \, dx \leq C D_1.
     \end{split}
\end{equation}

\begin{proof}[Proof of Theorem 1]
Assume that $\nabla \mathbf{B}\in L^2(\mathbb{R}^3)$ and $(\mathbf{u}, \mathbf{B})\in M^p_\gamma(\mathbb{R}^3)$ with $0<\gamma<3\leq p<+\infty$.
We divide our proof into three cases.

\textbf{(i). The case when $\eta(p, \gamma)< 0$}.
According to the definition of $\|\cdot\|_{M^p_\gamma(\mathbb{R}^3)}$, we obtain
\begin{equation}\label{4-2}
     \begin{split}
  D_1\leq R^{3\eta}\left(\|\mathbf{u}\|_{M^p_\gamma}^2+ \|\mathbf{B}\|_{M^p_\gamma}^2\right)\|\mathbf{u}\|_{M^p_\gamma}
  +R^{-\frac{1}{3}}R^{2\eta}\left(\|\mathbf{u}\|_{M^p_\gamma}^2+ \|\mathbf{B}\|_{M^p_\gamma}^2\right)+R^{-\frac{1}{2}}\|\nabla \mathbf{B}\|_{L^2}^2.
     \end{split}
\end{equation}
Passing $R\rightarrow +\infty$ in \eqref{4-1}, we see
\begin{equation*}
     \begin{split}
  \lim_{R\rightarrow +\infty} D_1
  =0.
     \end{split}
\end{equation*}
Thus, it leads to
\begin{equation*}
     \begin{split}
     \lim_{R\rightarrow +\infty}(\int_{B_R}|\nabla \mathbf{u}|^2 \, dx + \int_{B_R}|\operatorname{curl} \mathbf{B}|^2 \, dx ) =0,
     \end{split}
\end{equation*}
which together with Levi's monotone convergence theorem yields
\begin{equation*}
     \begin{split}
     \int_{\mathbb{R}^3}|\nabla \mathbf{u}|^2 \, dx + \int_{\mathbb{R}^3}|\operatorname{curl} \mathbf{B}|^2 \, dx  =0.
     \end{split}
\end{equation*}

Using the fact that
$$\|\nabla \mathbf{B}\|_{L^2(\mathbb{R}^3)}\leq C\left(\|\operatorname{div} \mathbf{B}\|_{L^2(\mathbb{R}^3)}+ \|\operatorname{curl} \mathbf{B}\|_{L^2(\mathbb{R}^3)}\right),$$
we conclude that $\|(\nabla \mathbf{u}, \nabla \mathbf{B})\|_{L^2(\mathbb{R}^3)}=0$ and $\mathbf{u}=\mathbf{B}= 0$ on $\mathbb{R}^3$ from $(\mathbf{u}, \mathbf{B})\in M^p_\gamma(\mathbb{R}^3).$

\textbf{(ii). The case when $\eta(p, \gamma)=0$}. Passing $R\rightarrow +\infty$ in \eqref{4-1} and by virtue of \eqref{4-2}, we get
\begin{equation*}
     \begin{split}
  \lim_{R\rightarrow +\infty} D_1
  \leq\|\mathbf{u}\|_{M^p_\gamma}^3+ \|\mathbf{B}\|_{M^p_\gamma}^2\|\mathbf{u}\|_{M^p_\gamma},
     \end{split}
\end{equation*}
and thus
\begin{equation*}
     \begin{split}
   \int_{\mathbb{R}^3}|\nabla \mathbf{u}|^2\, dx + \int_{\mathbb{R}^3}|\operatorname{curl} \mathbf{B}|^2 \, dx \leq C_0 \left(\|\mathbf{u}\|_{M^p_\gamma}^3+ \|\mathbf{B}\|_{M^p_\gamma}^2\|\mathbf{u}\|_{M^p_\gamma}\right).
     \end{split}
\end{equation*}
It follows from the assumption \eqref{con-0} that
\begin{equation*}
     \begin{split}
  \|(\nabla \mathbf{u}, \nabla \mathbf{B})\|_{L^2(\mathbb{R}^3)}^2\leq C_0\left(\|\mathbf{u}\|_{M^p_\gamma}^3+ \|\mathbf{B}\|_{M^p_\gamma}^2\|\mathbf{u}\|_{M^p_\gamma}\right)
  \leq C_0\delta \|(\nabla \mathbf{u}, \nabla \mathbf{B})\|_{L^2(\mathbb{R}^3)}^2 .
     \end{split}
\end{equation*}
Since $0< C_0\delta <1$, we conclude that $\|(\nabla \mathbf{u}, \nabla \mathbf{B})\|_{L^2(\mathbb{R}^3)}=0$ and  $\mathbf{u}=\mathbf{B}= 0$ on $\mathbb{R}^3$ .

\textbf{(iii). The case when $\eta(p, \gamma)>0$}. Here, we see $2\eta(p, \gamma)\leq 6\eta(p, \gamma)$ and $3\eta(p, \gamma)\leq 6\eta(p, \gamma)$. By direct calculation, we have
\begin{equation*}
     \begin{split}
R^{3\eta}(R^{-\gamma}\int_{C_R}|\mathbf{u}|^p \,dx)^{\frac{2}{p}}(R^{-\gamma}\int_{C_R}|\mathbf{u}|^p \,dx)^{\frac{1}{p}}
\leq \left[ R^{3\eta}(R^{-\gamma}\int_{C_R}|\mathbf{u}|^p \,dx)^{\frac{1}{p}} \right]^2 (R^{-\gamma}\int_{C_R}|\mathbf{u}|^p \,dx)^{\frac{1}{p}}
     \end{split}
\end{equation*}
and
\begin{equation*}
     \begin{split}
R^{-\frac{1}{3}}R^{2\eta}(R^{-\gamma}\int_{C_R}|\mathbf{u}|^p \,dx)^{\frac{2}{p}}\leq R^{-\frac{1}{3}}\left[ R^{3\eta}(R^{-\gamma}\int_{C_R}|\mathbf{u}|^p \,dx)^{\frac{1}{p}} \right]^2.
     \end{split}
\end{equation*}
Similarly, we get
\begin{equation*}
     \begin{split}
R^{3\eta}(R^{-\gamma}\int_{C_R}|\mathbf{B}|^p \,dx)^{\frac{2}{p}}(R^{-\gamma}\int_{C_R}|\mathbf{u}|^p \,dx)^{\frac{1}{p}}
\leq \left[R^{3\eta}(R^{-\gamma}\int_{C_R}|\mathbf{u}|^p \,dx)^{\frac{1}{p}}\right](R^{-\gamma}\int_{C_R}|\mathbf{B}|^p \,dx)^{\frac{2}{p}}
     \end{split}
\end{equation*}
and
\begin{equation*}
     \begin{split}
R^{-\frac{1}{3}}R^{2\eta}(R^{-\gamma}\int_{C_R}|\mathbf{B}|^p \,dx)^{\frac{2}{p}}\leq R^{-\frac{1}{3}}\left[ R^{3\eta}(R^{-\gamma}\int_{C_R}|\mathbf{B}|^p \,dx)^{\frac{1}{p}} \right]^2.
     \end{split}
\end{equation*}

Noting the condition \eqref{con-1}, passing $R\rightarrow +\infty$ in \eqref{4-1}, we deduce
\begin{equation*}
     \begin{split}
   \int_{\mathbb{R}^3}|\nabla \mathbf{u}|^2\, dx + \int_{\mathbb{R}^3}|\operatorname{curl} \mathbf{B}|^2 \, dx =0.
     \end{split}
\end{equation*}
Hence, we conclude that  $\mathbf{u}= \mathbf{B}= 0$ on $\mathbb{R}^3$.
\end{proof}

\begin{proof}[Proof of Theorem 2]
Assume that $\nabla \mathbf{B} \in L^2(\mathbb{R}^3)$, $\mathbf{u}\in M^p_{\gamma, 0}(\mathbb{R}^3)$ and $\mathbf{B}\in M^p_\gamma(\mathbb{R}^3)$ for $0<\gamma<3\leq p<+\infty$.
By the definition of $\|\cdot\|_{M^p_\gamma(\mathbb{R}^3)}$, we have
\begin{equation}\label{proof-2}
     \begin{split}
     D_1 &\leq R^{3\eta}(\|\mathbf{u}\|_{M^p_\gamma}^2+\|\mathbf{B}\|_{M^p_\gamma}^2)(R^{-\gamma}\int_{C_R}|\mathbf{u}|^p \,dx)^{\frac{1}{p}}+  R^{-\frac{1}{2}}\|\nabla \mathbf{B}\|^3_{L^2}\\
         &\quad + R^{-\frac{1}{3}}R^{2\eta}(\|\mathbf{u}\|_{M^p_\gamma}^2+\|\mathbf{B}\|_{M^p_\gamma}^2).
     \end{split}
\end{equation}
Considering the assumption $\eta(p, \gamma)= 0$,  letting $R\rightarrow +\infty$ in \eqref{4-1}, and applying Levi's monotone convergence theorem, we have
\begin{equation*}
     \begin{split}
   \int_{\mathbb{R}^3}|\nabla \mathbf{u}|^2\, dx + \int_{\mathbb{R}^3}|\operatorname{curl} \mathbf{B}|^2 \, dx =0.
     \end{split}
\end{equation*}
Hence, we get the desired identity $\mathbf{u}= \mathbf{B}= 0$ on $\mathbb{R}^3$.
\end{proof}

\noindent {\bf Acknowledgments.}
The work is partially supported by the National Natural Science Foundation of China under the grants 11571279
and 11601423.

\end{document}